\numberwithin{equation}{section}
\DeclareMathOperator*{\essinf}{ess\,inf}
\renewcommand{\@biblabel}[1]{#1\hfill \hspace{-0.2cm}}
\theoremstyle{plain}
\newtheorem{thm}[equation]{Theorem}
\newtheorem{lem}[equation]{Lemma}
\newtheorem{cor}[equation]{Corollary}
\theoremstyle{definition}
\newtheorem{defn}[equation]{Definition}
\newtheorem{eg}[equation]{Example}
\theoremstyle{remark}
\newtheorem{rem}[equation]{Remark}
\numberwithin{equation}{section}
\newcommand{\R}{\mathbb{R}}
\newcommand{\Rn}{\mathbb{R}^n}
\renewcommand{\phi}{\varphi}
\renewcommand{\epsilon}{\varepsilon}
\def\le{\leqslant}
\def\leq{\leqslant}
\def\ge{\geqslant}
\def\geq{\geqslant}
\def\phi{\varphi}
\def\rho{\varrho}
\def\vartheta{\theta}
\newcommand{\Phiw}{\Phi_{\text{\rm w}}}
\newcommand{\Phic}{\Phi_{\text{\rm c}}}
\def\supp{\operatorname{supp}}
\def\esssup{\operatornamewithlimits{ess\,sup}}
\def\osc{\operatornamewithlimits{osc}}
\def\dist{\qopname\relax o{dist}}
\def\osc{\operatornamewithlimits{osc}}
\def\essinf{\operatornamewithlimits{ess\,inf}}
\def\supp{\operatornamewithlimits{supp}}
\def\loc{{\rm loc}}
\newcommand{\px}{{p(\cdot)}}
\newcommand{\qx}{{q(\cdot)}}
\newcommand{\inc}[1]{\hyperref[def:aInc]{{\normalfont(Inc){\ensuremath{_{#1}}}}}}
\newcommand{\dec}[1]{\hyperref[def:aDec]{{\normalfont(Dec){\ensuremath{_{#1}}}}}}
\newcommand{\ainc}[1]{\hyperref[def:aInc]{{\normalfont(aInc){\ensuremath{_{#1}}}}}}
\newcommand{\adec}[1]{\hyperref[def:aDec]{{\normalfont(aDec){\ensuremath{_{#1}}}}}}
\newcommand{\adeci}[1]{\hyperref[def:aDeci]{{\normalfont(aDec){\ensuremath{_{#1}^\infty}}}}}
\newcommand{\azero}{\hyperref[def:a0]{{\normalfont(A0)}}}
\newcommand{\aone}{\hyperref[def:a1]{{\normalfont(A1)}}}
\newcommand{\aones}[1]{\hyperref[def:a1s]{{\normalfont(A1-{\ensuremath{{#1}})}}}}
\date{\today}
\begin{document}

\title
{Bloch estimates in non-doubling generalized Orlicz spaces}

\author{Petteri Harjulehto}
\address{Petteri Harjulehto,
Department of Mathematics and Statistics,
FI-00014 University of Helsinki, Finland}
\email{\texttt{petteri.harjulehto@helsinki.fi}}

\author{Peter Hästö}
\address{Peter Hästö, Department of Mathematics and Statistics,
FI-20014 University of Turku, Finland}
\email{\texttt{peter.hasto@utu.fi}}

\author{Jonne Juusti}
\address{Jonne Juusti,
Department of Mathematics and Statistics,
FI-20014 University of Turku, Finland}
\email{\texttt{jthjuu@utu.fi}}

\keywords{Non-doubling, Harnack's inequality, generalized Orlicz space, Musielak--Orlicz spaces, nonstandard growth, variable exponent, double phase}

\begin{abstract}
We study minimizers of non-autonomous functionals 
\begin{align*}
\inf_u \int_\Omega \varphi(x,|\nabla u|) \, dx
\end{align*}
when $\varphi$ has generalized Orlicz growth. 
We consider the case where the upper growth rate of $\varphi$ is unbounded and
prove the Harnack inequality for minimizers. Our technique is based on ``truncating'' the function 
$\varphi$ to approximate the minimizer and Harnack estimates with uniform constants via a Bloch estimate 
for the approximating minimizers.
\end{abstract}

\maketitle


\begin{center}
\smallskip
\textit{Dedicated to Giuseppe Mingione on his  50th anniversary.}
\smallskip
\end{center}

\section{Introduction}

Minimizers of the variable exponent energy $\int |\nabla u|^{p(x)}dx$ have been studied in 
hundreds of papers. In almost all cases, it is assumed that there exist constants $c,C\in (1,\infty)$ 
such that $c\le p(x)\le C$ for all $x$. However, it is possible to use limiting procedures 
to study the borderline cases when $p(x)=1$ or $p(x)=\infty$ for some points \cite{HarHL08, HarHL09}. 
In recent years, minimizers of non-autonomous functionals 
\begin{align*}
\inf_u \int_\Omega \phi(x,|\nabla u|) \, dx
\end{align*}
have been studied when $\phi$ has generalized Orlicz growth
with tentative applications to anisotropic materials \cite{Zhi86} and image processing \cite{HarH21}. Again, the upper and lower growth rates are usually assumed to lie in $(1,\infty)$. 
In this article we consider the case when the upper growth rate is allowed to equal 
$\infty$ in some points and the lower growth rate is greater than $n$, the dimension. 
We prove the Harnack inequality for minimizers of such energies. 

Let us recall some information of the context by way of motivation. 
PDE with generalized Orlicz growth have been studied in many papers lately, 
both in the general setting and in particular special cases, 
such as the double phase case (e.g.\ \cite{BarCM15, BarCM18, ColM15a, ColM15b, Pap22}), 
perturbed variable exponent \cite{Ok16}, 
Orlicz variable exponent \cite{GPRT17}, 
degenerate double phase \cite{BCM2}, 
Orlicz double phase \cite{BaaB21, BOh3}, 
variable exponent double phase \cite{CBGHW22, MizNOS20, MizOS21}, 
multiple-phase \cite{BaaBO21, DeFO19}, and
double variable exponent \cite{ZhaR18}. 
Our framework includes all these cases. 

In the generalized Orlicz case it is known that 
solutions with given boudary values exist \cite{ChlGZ19,GwiSZ18, HarHK16}, 
minimizers or solutions with given boundary values are locally bounded, satisfy Harnack's inequality and belong to $C^{0, \alpha}_{\loc}$ \cite{BenK_pp, HarHL21, HarHT17, WanLZ19} or $C^{1, \alpha}_{\loc}$ \cite{HasO22, HasO_pp}, 
quasiminimizers satisfy a reverse H\"older inequality \cite{HarHK18}, 
minimizers for the obstacle problem are continuous \cite{Kar19} and the boundary Harnack inequality 
holds for harmonic functions \cite{ChlZ_pp}. Some articles deal with the non-doubling 
\cite{ChlGZ19b} or parabolic \cite{SkrV21} case as well as with the Gauss image problem \cite{LiSYY22}. 
We refer to the surveys \cite{Chl18, MinR21} and monographs \cite{ChlGSW21, HarH19, LanM19} for an 
overview. Advances have also been made in the field of $(p, q)$-growth problems 
\cite{DeFM20, DeFM21a, DeFM21b, Mar20, Mar21, MinP20}. 

In \cite{BenHHK21, BenK_pp}, the Harnack inequality was established in the 
doubling generalized Orlicz case for bounded or general solutions. 
In the current paper, we consider the effect of removing the 
assumption that the growth function is doubling thus allowing the upper growth rate to equal 
$\infty$. The approach is based on ideas from \cite{HarHL09, Lat04} involving approximating the energy functional. 
This is more difficult compared to the variable exponent case, since the form of the approximating 
problem is unclear as is the connection between solutions and minimizers. Additionally, 
the challenge in taking limits without the doubling assumption is to track the 
dependence of various constants on the parameters and to ensure that no extraneous 
dependence is introduced in any step. Nevertheless, we improve even the result for the 
variable exponent case.

Let us consider an example of our main result, Theorem~\ref{thm:harnack-2}. 
In the variable exponent case $\phi(x,t):=t^{p(x)}$ we compare with our 
previous result \cite[Theorem~6.4]{HarHL09}. In the previous result, we 
assumed that $\frac1p$ is Lipschitz continuous, but now we only need the 
more natural $\log$-Hölder continuity. 
Furthermore, the previous result applied only to small balls in which the exponent was 
(locally) bounded. The next example shows that the new result applies even to some sets 
where the the exponent is unbounded.

\begin{eg}[Variable exponent]\label{eg:varExp}
Define $p:B_1\to (n,\infty]$ on the unit ball $B_1$ as $p(x) := 2n\log\frac e{|x|}$. 
Hence $p(0)=\infty$ but $p<\infty$ a.e.
Assume that $f\in W^{1,\px}(B_1)$ with $\rho_\px(|\nabla f|)<\infty$.
If $u\in f+W^{1,\px}_0(B_1)$ is a minimizer of the $\px$-energy, then the Harnack inequality
\[
\sup_{B_r}(u+r)
\le C\inf_{B_r} (u+r)
\]
holds for $r\le \frac14$. 
The constant $C$ depends only on $n$ and $\rho_\px(|\nabla f|)$. 
Note that $B_r$ we have, in the notation of Theorem~\ref{thm:harnack-2}, 
$p^-=2n\log\frac er$ and $q^\circ=2n\log\frac {2e}r$ so that 
$\frac{q^\circ}{p^-} = \frac{1+\log2+\log \frac1r}{\log2+\log \frac1r}$ is bounded 
independent of $r$.
\end{eg}

In the double phase case we also obtain a corollary of Theorem~\ref{thm:harnack} which improves earlier results 
in that the dependence of the constant is only on $\frac qp$, not $p$ and $q$. Note that the usual 
assumption of Hölder continuity of $a$ is a special case of the 
inequality in the lemma, see \cite[Proposition~7.2.2]{HarH19}. 
Also note that the ``$+r$'' in the Harnack inequality is not needed in this case, since the double phase 
functional satisfies \aone{} in the range $[0,\frac K{|B|}]$ rather than 
$[1,\frac K{|B|}]$.

\begin{cor}[Double phase]\label{cor:doublePhase}
Let $\Omega\subset \Rn$ be a bounded domain, $n<p<q$ and $H(x,t):=t^p+a(x)t^q$.
Assume that $f\in W^{1,H}(\Omega)$ and 
\[
a(x)\lesssim a(y) + |x-y|^\alpha
\qquad\text{with}\quad 
\tfrac qp\le 1+\tfrac \alpha n
\]
for every $x,y\in \Omega$. 
Then any minimizer $u$ of the $\phi$-energy with boundary value function $f$ satisfies the Harnack inequality
\[
\sup_{B_r}u
\le C\inf_{B_r} u.
\]
The constant $C$ depends only on $n$, $\frac qp$ and $\rho_H(|\nabla f|)$.
\end{cor}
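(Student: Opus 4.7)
The plan is to obtain Corollary~\ref{cor:doublePhase} as a direct application of Theorem~\ref{thm:harnack} to $\phi(x,t)=H(x,t)=t^p+a(x)t^q$. The work consists of two parts: verifying that $H$ satisfies the structural hypotheses of Theorem~\ref{thm:harnack} with constants controlled by $n$ and $q/p$, and observing why the $+r$ correction can be dropped.

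The growth conditions \inc{p} and \adec{q} are immediate for $H$ with constant $1$: the ratios $H(x,t)/t^p = 1+a(x)t^{q-p}$ and $H(x,t)/t^q = t^{p-q}+a(x)$ are respectively increasing and decreasing, and no dependence on $p,q$ enters here beyond the ratio $q/p$. The key step is the \aone{}-type condition. Fix a ball $B=B_r\subset\Omega$ with $r\le 1$ and points $x,y\in B$. Using $a(x)\lesssim a(y)+Cr^\alpha$ from the hypothesis,
\begin{equation*}
H(x,\beta t) = \beta^p t^p + \beta^q a(x) t^q \le \beta^p t^p + \beta^q a(y) t^q + \beta^q C r^\alpha t^q.
\end{equation*}
On the range $H(y,t)\le K/|B|$ we have $t^p\le K/|B|\sim K r^{-n}$, hence $t^{q-p}\le K^{(q-p)/p} r^{-n(q-p)/p}$ and therefore
\begin{equation*}
\beta^q C r^\alpha t^q \le \beta^q C K^{(q-p)/p}\, r^{\alpha-n(q-p)/p}\, t^p.
\end{equation*}
The assumption $\tfrac{q}{p}\le 1+\tfrac{\alpha}{n}$ makes the exponent of $r$ nonnegative, so this error term is bounded by a constant (depending only on $n$, $q/p$, the constant in the Hölder-type bound on $a$, and $K$) times $\beta^q t^p$. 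Choosing $\beta\in(0,1]$ small enough we absorb it into $(1-\beta^p) t^p$, obtaining $H(x,\beta t)\le H(y,t)$, which is \aone{}. Since the computation uses no threshold $t\ge 1$, the condition is valid on the full range $[0,K/|B|]$ in the sense recalled in the paragraph preceding the corollary; this is exactly the feature that lets us remove the $+r$ in the Harnack conclusion.

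With \inc{p}, \adec{q} and \aone{} established, Theorem~\ref{thm:harnack} applies and delivers $\sup_{B_r} u\le C\inf_{B_r} u$. Tracing the dependencies, the structural constants for $H$ involve only $n$ and $q/p$, while $\rho_H(|\nabla f|)$ enters via the boundary datum, so the final constant depends only on $n$, $q/p$ and $\rho_H(|\nabla f|)$, as claimed.

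The main obstacle is the quantitative bookkeeping in the last step: one must confirm that every ingredient of Theorem~\ref{thm:harnack}, after the $\phi$-specific constants are fixed to those of $H$, yields a Harnack constant depending only on the ratio $q/p$ and not on $p$ and $q$ separately. In particular, the Sobolev-type exponents and the iteration in the proof of Theorem~\ref{thm:harnack} must be inspected to ensure that none of them reintroduce an isolated dependence on $p$ or $q$; the output of the \aone{} verification above is already of this form, and the other growth constants for $H$ are absolute.
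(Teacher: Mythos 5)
Your overall route is the one the paper intends: verify the hypotheses of Theorem~\ref{thm:harnack} for $H$ and use the fact that the double phase functional satisfies \aone{} on the extended range $[0,K/|B|]$. Your verification of \ainc{p}, \adec{q} and of the \aone{}-inequality is correct in substance (it is the computation behind \cite[Proposition~7.2.2]{HarH19} with the lower threshold removed). Three smaller points: you never check \azero{}, which is also a hypothesis of Theorem~\ref{thm:harnack} (it holds because the assumption on $a$ and the boundedness of $\Omega$ make $a$ bounded, so the structural constants inevitably also involve the data of $a$, not literally only $n$ and $q/p$); in the absorption step the implicit constant of $a(x)\lesssim a(y)+|x-y|^\alpha$ multiplies $a(y)t^q$ as well, and to keep $\beta$ free of $p$ and $q$ separately you should use $q>p>n$, i.e.\ $\beta^q\le\beta^p\le\beta^n$; and the dependence on $\rho_H(|\nabla f|)$ should be justified explicitly by $\rho_H(|\nabla u|)\le\rho_H(|\nabla f|)$, valid since $f$ is an admissible competitor.

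The genuine gap is the final step: Theorem~\ref{thm:harnack} does not ``deliver'' $\sup_{B_r}u\le C\inf_{B_r}u$; its conclusion is $\sup_{B_r}(u+r)\le C\inf_{B_r}(u+r)$, and no manipulation of that statement removes the $+r$. The extended-range \aone{} is indeed the reason the $+r$ can be dropped, but it acts inside the proofs, not on the statement of the theorem. In Theorem~\ref{thm:Bloch} the shift to $u+2r$ is what gives $v\ge 1/\beta$ and hence $\phi^-_{B_{2r}}(\gamma v)\ge 1$, which is the step the extended range makes unnecessary; but the shift is also used in Lemma~\ref{lem:caccioppoli} to get $\psi(v)^{-\ell}\le 1$ so that $h=\psi(v)^{-\ell}\eta^{s}\tilde u$ is an admissible test function, and positivity is needed for the logarithm in Lemma~\ref{lem:Harnack}. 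So to prove the corollary one must rerun the Caccioppoli--Bloch argument with $u+\delta$, $\delta>0$, in place of $u+2r$, check that admissibility of the test function uses $\delta>0$ only qualitatively while the resulting bound on $\int_{B_r}|\nabla\log(u+\delta)|^n\,dx$ is independent of $\delta$ precisely because \aone{} now holds down to $0$, then apply Lemma~\ref{lem:Harnack} to $u+\delta$ and let $\delta\to 0^+$. Your proposal asserts the $r$-free conclusion without supplying this argument; moreover, the ``iteration'' you propose to inspect does not exist, since the proof of Theorem~\ref{thm:harnack} proceeds via the Bloch bound and Lemma~\ref{lem:Harnack} rather than Moser iteration.
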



\section{Definitions and notation}

We briefly introduce our definitions. More information on $L^\phi$-spaces can be found in \cite{HarH19}. We assume that $\Omega \subset \Rn$ is a bounded domain, $n\ge 2$.
\textit{Almost increasing} means that there exists a constant $L\ge 1$ such that $f(s) \le L f(t)$ for all $s<t$. 
If there exists a constant $C$ such that $f(x) \le C g(x)$ for almost every $x$, then we write $f \lesssim g$. If $f\lesssim g\lesssim f$, then we write $f \approx g$. 

\begin{defn}
We say that $\phi: \Omega\times [0, \infty) \to [0, \infty]$ is a 
\textit{weak $\Phi$-function}, and write $\phi \in \Phiw(\Omega)$, if 
the following conditions hold for a.e.\ $x \in \Omega$:
\begin{itemize}
\item 
$y \mapsto \phi(y, f(y))$ is measurable for every measurable function $f:\Omega\to \R$.
\item
$t \mapsto \phi(x, t)$ is non-decreasing.
\item 
$\displaystyle \phi(x, 0) = \lim_{t \to 0^+} \phi(x,t) =0$ and $\displaystyle \lim_{t \to \infty}\phi(x,t)=\infty$.
\item 
$t \mapsto \frac{\phi(x, t)}t$ is $L$-almost increasing on $(0,\infty)$ with
constant $L$ independent of $x$.
\end{itemize}
If $\phi\in\Phiw(\Omega)$ is additionally convex and left-continuous with respect to $t$ for almost every $x$, then $\phi$ is a 
\textit{convex $\Phi$-function}, and we write $\phi \in \Phic(\Omega)$. If $\phi$ does not depend on $x$, then we omit the set and write $\phi \in \Phiw$ or $\phi \in \Phic$.

For $\phi\in\Phiw(\Omega)$ and $A\subset \Rn$ we denote $\displaystyle\phi^+_{A}(t) := \esssup_{x \in A\cap \Omega} \phi(x,t)$ and 
$\displaystyle\phi^-_{A}(t) := \essinf_{x \in A\cap \Omega} \phi(x,t)$. 
\end{defn}



We next define the un-weightedness condition \azero{}, the almost continuity conditions \aone{} 
and the growth conditions \ainc{} and \adec{}. Note that the constants $L_p$ and $L_q$ are 
independent of $x$ even though $p$ and $q$ can be functions. 

\begin{defn}
Let $s>0$, $p,q:\Omega\to[0,\infty)$ and let $\omega:\Omega\times [0,\infty)\to [0,\infty)$ be almost increasing with respect to the second variable. 
We say that $\phi:\Omega\times [0,\infty)\to [0,\infty)$ satisfies 
\begin{itemize}[leftmargin=4.5em]
\item[(A0)]\label{def:a0}
if there exists $\beta \in(0, 1]$ such that $ \phi(x,\beta)\le 1 \le \phi(x,\frac1{\beta})$ for a.e.\ $x \in \Omega$;
\item[(A1-$\omega$)] 
if for every $K \ge 1$ there exists $\beta \in (0,1]$ such that, for every ball $B$,
\[ 
\phi_B^+(\beta t) \le \phi_B^-(t) \quad\text{when}\quad \omega_B^-(t) \in \bigg[1, \frac{K}{|B|}\bigg];
\]
\item[(A1-$s$)]\label{def:a1n}\label{def:a1s}
if it satisfies \aones{\omega} for $\omega(x,t):=t^s$;
 \item[(A1)]\label{def:a1}
if it satisfies \aones{\phi};
\item[(aInc)$_\px$] \label{def:aInc} if
$t \mapsto \frac{\phi(x,t)}{t^{p(x)}}$ is $L_p$-almost 
increasing in $(0,\infty)$ for some $L_p\ge 1$ and a.e.\ $x\in\Omega$;
\item[(aDec)$_\qx$] \label{def:aDec}
if
$t \mapsto \frac{\phi(x,t)}{t^{q(x)}}$ is $L_q$-almost 
decreasing in $(0,\infty)$ for some $L_q\ge 1$ and a.e.\ $x\in\Omega$.
\end{itemize} 
We say that \ainc{} holds if \ainc{p} holds for some constant $p>1$, and similarly for \adec{}.
If in the definition of \ainc{\px} we have $L_p=1$, then we say that $\phi$ satisfies \inc{\px}, 
similarly for \dec{\qx}.
\end{defn}

Note that if $\phi$ satisfies \ainc{p} with a constant $L_p$, then it satisfies \ainc{r} for every 
$r\in (0, p)$ with the same constant $L_p$. This is seen as follows, with $s<t$:
\[
\frac{\phi(x,s)}{s^r}
= s^{p-r}\frac{\phi(x,s)}{s^{p}}
\le s^{p-r}L_p\frac{\phi(x,t)}{t^{p}}
= L_p\Big{(}\frac{s}{t}\Big{)}^{p-r}\frac{\phi(x,t)}{t^r}
\le L_p\frac{\phi(x,t)}{t^r}.
\]

Condition \aone{} with $K=1$ was studied in \cite{HarH19} under the name (A1$'$).
The condition \aones{\omega} was introduced in \cite{BenHHK21} 
to combine \aone{} and \aones{n} as well as other cases. It is the 
appropriate assumption if we have \textit{a priori} information that the 
solution is in $W^{1,\omega}$ or the corresponding Lebesgue or H\"older space. 
The most important cases are $\omega=\phi$ and $\omega(x,t)=t^s$, 
that is \aone{} and \aones{s}.


\begin{defn}\label{def:Lphi}
Let $\phi \in \Phiw(\Omega)$ and define the \textit{modular} 
$\varrho_\phi$ for $u\in L^0(\Omega)$, the set of measurable functions in $\Omega$, by 
 \begin{align*}
 \varrho_\phi(u) &:= \int_\Omega \phi(x, |u(x)|)\,dx.
 \end{align*}
The \emph{generalized Orlicz space}, also called Musielak--Orlicz space, is defined as the set 
 \begin{align*}
 L^\phi(\Omega) &:= \big\{u \in L^0(\Omega) :
 \lim_{\lambda \to 0^+} \varrho_\phi(\lambda u) = 0\big\}
 \end{align*}
equipped with the (Luxemburg) quasinorm 
 \begin{align*}
 \|u\|_{L^\phi(\Omega)} &:= \inf \Big\{ \lambda>0 : \varrho_\phi\Big(
 \frac{u}{\lambda} \Big) \le 1\Big\}.
 \end{align*}
We abbreviate $\|u\|_{L^\phi(\Omega)}$ by $\|u\|_{\phi}$ if the set is clear from context.
\end{defn}

\begin{defn}
A function $u \in L^\phi(\Omega)$ belongs to the
\textit{Orlicz--Sobolev space $W^{1, \phi}(\Omega)$} if its weak partial derivatives $\partial_1 u, \ldots, \partial_n u$ exist and belong to $L^{\phi}(\Omega)$.
For $u \in W^{1,\phi}(\Omega)$, we define the quasinorm
\[
\| u \|_{W^{1,\phi}(\Omega)} := \| u \|_{L^\phi(\Omega)} + \| \nabla u \|_{L^\phi(\Omega)}.
\]
We define \emph{Orlicz--Sobolev space with zero boundary values} $W^{1, \phi}_0(\Omega)$ as the closure of 
$\{u \in W^{1, \phi}(\Omega) : \supp u \subset \Omega\}$ in $W^{1, \phi}(\Omega)$. 
\end{defn}

In the definition $\| \nabla u \|_{L^\phi(\Omega)}$ is an abbreviation of $\big\| | \nabla u | \big\|_{L^\phi(\Omega)}$.
Again, we abbreviate $\| u \|_{W^{1,\phi}(\Omega)}$ by $\| u \|_{1,\phi}$ if $\Omega$ is clear from context.
$W^{1, \phi}_0(\Omega)$ is a closed subspace of $W^{1, \phi}(\Omega)$, and hence reflexive when 
$W^{1, \phi}(\Omega)$ is reflexive. We write $f + W^{1, \phi}_0(\Omega)$ to denote the set 
$\{f + v: v \in W^{1, \phi}_0(\Omega)\}$.

\begin{defn}\label{defn:minimizer}
We say that $u \in W^{1, \phi}_\loc(\Omega)$ is a \emph{local minimizer} if
\[
\int_{\supp h} \phi(x,|\nabla u|) \,dx
\le
\int_{\supp h} \phi(x,|\nabla(u + h)|) \,dx
\]
for every $h \in W^{1,\phi}(\Omega)$ with $\supp h \Subset \Omega$.
We say that $u \in W^{1, \phi}(\Omega)$ is a \emph{minimizer of the $\phi$-energy} with 
boundary values $f \in W^{1, \phi}(\Omega)$, if
$u-f \in W^{1, \phi}_0(\Omega)$, and 
\[
\int_{\Omega} \phi(x, |\nabla u|) \, dx \le \int_{\Omega} \phi(x, |\nabla v|) \, dx 
\] 
for every $v \in f+ W^{1, \phi}_0(\Omega)$. 
\end{defn}

Let $h \in W^{1, \phi}(\Omega)$ have compact support in $\Omega$, $f\in W^{1,\phi}(\Omega)$ and 
$u \in f+W^{1, \phi}_0(\Omega)$ is a minimizer of the $\phi$-energy. 
Then $u+h \in f + W^{1, \phi}_0(\Omega)$ by the definition. By the $\phi$-energy 
minimizing property, 
\[
\int_{\Omega} \phi(x, |\nabla u|) \, dx \le \int_{\Omega} \phi(x, |\nabla (u+h)|) \, dx;
\]
the integrals over the set $\Omega\setminus \supp h$ cancel, and so $u$ is a local minimizer. 
Hence every minimizer $u \in W^{1, \phi}(\Omega)$ of the $\phi$-energy with boundary values $f$ 
is a local minimizer.


\section{Auxiliary results}

We denote by $\phi^*$ the conjugate $\Phi$-function of $\phi\in\Phiw(\Omega)$, defined by 
\[
\phi^*(x,t):= \sup_{s\ge 0} (st-\phi(x,s)). 
\]
From this definition, we have Young's inequality $st \le \phi(x,s) + \phi^*(x,t)$. 
H\"older's inequality holds in generalized Orlicz spaces for $\phi\in\Phiw(\Omega)$ with constant $2$
\cite[Lemma~3.2.13]{HarH19}:
\[
\int_\Omega |u|\, |v|\, dx \le 2 \|u\|_\phi \|v\|_{\phi^*}.
\]
We next generalize the relation $\phi^*( \frac{\phi(t)}{t}) \le \phi(t)$ which 
is well-known in the convex case, to weak $\Phi$-functions. 
The next results are written for $\phi\in\Phiw$ but can be applied to 
$\phi\in\Phiw(\Omega)$ point-wise.

\begin{lem}\label{lem:phi*}
Let $\phi\in \Phiw$ satisfy \ainc{1} with constant $L$. Then 
\[
\phi^*\Big( \frac{\phi(t)}{Lt}\Big) \le \frac{\phi(t)}{L}.
\]
\end{lem}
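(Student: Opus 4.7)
The plan is to work directly from the definition
\[
\phi^*\Big(\tfrac{\phi(t)}{Lt}\Big) = \sup_{s\ge 0}\Big(s\,\tfrac{\phi(t)}{Lt} - \phi(s)\Big),
\]
and show that the expression inside the supremum is bounded by $\phi(t)/L$ for every $s\ge 0$. The natural thing is to split into the two ranges $s\le t$ and $s>t$, since \ainc{1} is precisely a monotonicity statement for $\tau\mapsto \phi(\tau)/\tau$ that compares values at different heights.

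For $s\le t$ we simply drop the $-\phi(s)$ term (using $\phi\ge 0$) and use $s/t\le 1$:
\[
s\,\tfrac{\phi(t)}{Lt} - \phi(s) \;\le\; \tfrac{s}{t}\cdot\tfrac{\phi(t)}{L}\;\le\; \tfrac{\phi(t)}{L}.
\]
For $s>t$, the assumption \ainc{1} with constant $L$ gives $\phi(t)/t\le L\,\phi(s)/s$, which rearranges to $s\,\phi(t)/(Lt)\le \phi(s)$, so the expression inside the supremum is nonpositive and thus trivially $\le \phi(t)/L$.

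Combining the two cases and taking the supremum on the left yields the claimed bound. There is no real obstacle here; the only thing to notice is that the factor $\frac{1}{L}$ on the argument of $\phi^*$ is exactly what is needed to absorb the constant in the almost-increasing inequality, so that the $s>t$ case produces a nonpositive quantity rather than something that would have to be compared to $\phi(t)/L$ via another application of monotonicity.
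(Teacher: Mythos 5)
Your proof is correct and follows essentially the same route as the paper: for $s\le t$ drop $-\phi(s)$ and use $s/t\le 1$, and for $s>t$ use \ainc{1} to see that $s\,\phi(t)/(Lt)-\phi(s)\le 0$, then take the supremum. No gaps.
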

\begin{proof}
When $s\le t$ we use $s\frac{\phi(t)}{Lt} - \phi(s) \le s\frac{\phi(t)}{Lt} \le \frac{\phi(t)}{L}$ 
to obtain 
\[
\phi^*\Big( \frac{\phi(t)}{Lt}\Big)
=
\sup_{s\ge 0} \Big(s\frac{\phi(t)}{Lt} - \phi(s) \Big)
\le 
\max \bigg\{\frac{\phi(t)}{L}, \sup_{s>t} \Big(s\frac{\phi(t)}{Lt} - \phi(s) \Big)\bigg\}.
\]
On the other hand, by \ainc{1} we conclude that $s\frac{\phi(t)}{Lt} \le \phi(s)$ when $s>t$, 
so the second term is non-positive and the inequality is established.
\end{proof}



If $\phi\in\Phiw$ is differentiable, then 
\begin{equation*} 
\frac d{dt}\frac{\phi(t)}{t^p} 
= 
\frac{\phi'(t) t^p- p t^{p-1}\phi(t)}{t^{2p}}
=
\frac{\phi(t)}{t^{p+1}}\bigg[ \frac{t\phi'(t)}{\phi(t)} - p \bigg].
\end{equation*}
Thus $\phi$ satisfies \inc{p} if and only if $\frac{t\phi'(t)}{\phi(t)} \ge p$.
Similarly, $\phi$ satisfies \dec{q} if and only if $\frac{t\phi'(t)}{\phi(t)} \le q$.
It also follows that if $\phi$ satisfies \inc{p} and \dec{q}, then 
\begin{equation}\label{eq:incdec} 
\tfrac{1}{q} t\phi'(t) \le \phi(t)\le \tfrac{1}{p} t\phi'(t)
\end{equation} and so
$\phi'$ satisfies \ainc{p-1} and \adec{q-1}. We next show that 
the last claim holds even if only \ainc{} or \adec{} is assumed of 
$\phi$ which is convex but not necessarily differentiable.

For $\phi \in \Phic$ we denote the left and right derivative by 
$\phi_-'$ and $\phi_+'$, respectively. We define the left derivative to be zero at the origin, i.e. $\phi_-'(0):=0$.
Assume that $\phi$ satisfies \ainc{p} with $p>1$, and let $t_0>0$ be such that $\phi(t_0) < \infty$. Then
\[
\phi'_+(0) = \lim_{t\to 0^+} \frac{\phi(t)}{t} \le \lim_{t\to 0^+} L_p t^{p-1} \frac{\phi(t_0)}{t_0^p} =0.
\]
Since $\phi_+'$ is right-continuous we also obtain that 
\[
\lim_{t\to 0^+}\phi_+'(t)
= \phi_+'(0)= 0.
\]

\begin{lem}\label{lem:aincadec}
Let $\phi \in \Phic$ satisfy \ainc{p} and \adec{q} with constants $L_p$ and $L_q$, respectively.
Then \[
 \frac{1}{(L_qe-1) q} t\phi'_+(t) \le \phi(t) \le \frac{2\ln(2L_p)}p t\phi'_-(t)
\]
for every $t\ge 0$, and $\phi'_-$ and $\phi'_+$ satisfy \ainc{p-1} and \adec{q-1}, with 
constants depending only on $\frac{q}{p}$, $L_p$ and $L_q$. 
\end{lem}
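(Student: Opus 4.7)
My plan is to obtain both pointwise bounds by the same principle: compare $\phi(t)$ with $\phi$ at a carefully chosen point $t/c$ (for the upper bound) or $ct$ (for the lower bound), using the standard convexity inequalities
\[
\phi(t)-\phi(s)\le \phi'_-(t)(t-s)\quad (s<t), \qquad \phi(r)-\phi(t)\ge \phi'_+(t)(r-t)\quad (r>t),
\]
which hold for every $\phi\in\Phic$ since $\phi'_-(t)$ and $\phi'_+(t)$ are subgradient slopes at $t$. Conditions \ainc{p} and \adec{q} then let me convert the left-hand sides into multiples of $\phi(t)$.

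For the upper bound, I would set $s=t/c$ with $c>1$ and apply \ainc{p} in the form $\phi(t/c)\le L_p c^{-p}\phi(t)$, obtaining $\phi(t)(1-L_p c^{-p})\le \phi'_-(t)\,t(1-1/c)$. The calibration $c=(2L_p)^{1/p}$ makes $L_p c^{-p}=\tfrac12$, hence $\tfrac12\phi(t)\le \phi'_-(t)\,t(1-1/c)$; since $1-(2L_p)^{-1/p}\le \ln(2L_p)/p$ by the elementary $1-e^{-x}\le x$, the claim follows. For the lower bound, I would take $r=(1+\tfrac1q)t$, so $r-t=t/q$, and use $(1+\tfrac1q)^q\le e$ together with \adec{q} to obtain $\phi(r)\le L_q e\phi(t)$; then $(L_qe-1)\phi(t)\ge \phi'_+(t)\,t/q$, as required.

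For the almost monotonicity of $\phi'_\pm$, the two established bounds rearrange to
\[
\phi'_+(t)\le \frac{(L_qe-1)q}{t}\phi(t), \qquad \phi'_-(t)\ge \frac{p}{2\ln(2L_p)\,t}\phi(t),
\]
and $\phi'_-\le \phi'_+$ pointwise. For $0<s<t$, \ainc{p} of $\phi$ lets me chain
\[
\frac{\phi'_+(s)}{s^{p-1}}\le \frac{(L_qe-1)q\,\phi(s)}{s^p}\le L_p(L_qe-1)q\,\frac{\phi(t)}{t^p}\le \frac{2L_p(L_qe-1)\ln(2L_p)\,(q/p)}{t^{p-1}}\phi'_-(t),
\]
giving \ainc{p-1} for both $\phi'_-$ and $\phi'_+$ with constant depending only on $L_p$, $L_q$ and $q/p$. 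The \adec{q-1} bound is symmetric: one uses \adec{q} of $\phi$ in the analogous chain and swaps the roles of the two bounds on $\phi'_\pm$.

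The main obstacle is calibrating $c$ sharply enough to obtain the stated constants. A naive estimate in the upper bound produces only $\frac{(c-1)c^{p-1}}{c^p-L_p}$, and it is the specific choice $c=(2L_p)^{1/p}$ combined with $1-e^{-x}\le x$ that collapses this to $\tfrac{2\ln(2L_p)}{p}$; similarly, $c=1+1/q$ is what extracts the bare $e$ from \adec{q} in the lower bound. These two calibrations are also what force the final constants in the monotonicity claim to depend on $q/p$ rather than on $p$ and $q$ separately.
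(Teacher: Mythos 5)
Your proof is correct and follows essentially the same route as the paper: the subgradient inequalities you invoke are exactly the paper's estimates $\phi(t)-\phi(rt)\le (t-rt)\phi'_-(t)$ and $\phi(Rt)-\phi(t)\ge (Rt-t)\phi'_+(t)$, your calibrations $c=(2L_p)^{1/p}$ and $r=(1+\tfrac1q)t$ coincide with the paper's choices $r=(2L_p)^{-1/p}$ and $R=1+\tfrac1q$, and your use of $1-e^{-x}\le x$ is the paper's convexity estimate $\tfrac{1-x^{-h}}{h}\le \ln x$ in disguise. The chaining of the two bounds via $\phi'_-\le\phi'_+$ together with \ainc{p} (respectively \adec{q}) of $\phi$ to get \ainc{p-1} and \adec{q-1} with constants depending only on $L_p$, $L_q$ and $\tfrac qp$ is likewise the paper's argument.
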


\begin{proof}
Since $\phi$ is convex we have
\[
\phi(t) 
= \int_0^t \phi'_+ (\tau) \, d \tau 
= \int_0^t \phi'_- (\tau) \, d \tau, 
\]
for a proof see e.g.\ \cite[Proposition 1.6.1, p.~37]{NicP06}.
Let $r \in [0,1)$. Since the left derivative is increasing, we obtain 
\[
\phi(t)-\phi(rt) = \int_{rt}^t \phi'_- (\tau) \, d \tau
\le (t- rt) \phi'_- (t).
\]
Thus
\[
t\phi'_- (t)
\geq \frac{\phi(t)-\phi(rt)}{1-r}
\geq \phi(t)\frac{1-L_pr^p}{1-r}
\]
where in the second inequality we used \ainc{p} of $\phi$.
Choosing $r := (2L_p)^{-1/p}$ we get
\[
\frac{1-L_pr^p}{1-r}
= \frac{1/2}{1-(2L_p)^{-1/p}}
= \frac{p}{2p(1-(2L_p)^{-1/p})}.
\]
Writing $h := \frac{1}{p}$ and $x:=2L_p$, we find that
\[
p\left(1-(2L_p)^{-1/p}\right)
=\frac{1-x^{-h}}{h}
\leq \left.\frac{dx^s}{ds}\right|_{s=0}
= \ln x,
\]
where the inequality follows from convexity of $s \mapsto x^s$.
Thus
$
t\phi'_- (t)
\geq \frac{p}{2\ln (2L_p)}\phi(t)
$.

Let $R > 1$. Since $\phi_+'$ is increasing, we obtain
\[ 
\phi(Rt)-\phi(t) = \int_{t}^{Rt} \phi'_+(\tau) \, d \tau 
\ge (Rt -t) \phi'_+ (t). 
\]
Thus
\[
t\phi'_+ (t)
\leq \frac{\phi(Rt)-\phi(t)}{R-1}
\leq \phi(t)\frac{L_qR^q-1}{R-1}
\]
where \adec{q} of $\phi$ was used in the second inequality.
With $R := 1+\frac{1}{q}$ we get
\[
t\phi'_+(t)
\leq \frac{L_q(1+\frac{1}{q})^q-1}{1/q} \phi(t)
\leq q(L_qe-1)\phi(t).
\]
We have established the inequality of the claim. 

We abbreviate $c_q:=\frac1{L_qe-1}$ and $c_p:=2\ln(2L_p)$.
Since $\phi$ is convex, we have $\phi_-' \le \phi_+'$ and so 
\[
\frac{c_q} q t\phi'_-(t) \le \frac{c_q} q t\phi'_+(t) \le \phi(t) \le \frac{c_p}p t\phi'_-(t) \le \frac{c_p}p t\phi'_+(t)
\]
Thus we obtain by \adec{q} of $\phi$ for $0<s<t$ that
\[
\frac{\phi'_+(t)}{t^{q-1}} \le \frac{q}{c_q} \frac{\phi(t)}{t^{q}} 
\le \frac{q L_q}{c_q} \frac{\phi(s)}{s^{q}} 
\le 
\frac qp\frac{L_q c_p}{c_q} \frac{\phi'_+(s)}{s^{q-1}} 
\]
and \adec{q-1} of $\phi'_+$ follows. The proof for \ainc{p-1} is similar as are the proofs for $\phi'_-$. 
\end{proof}

Before Lemma~\ref{lem:aincadec} we noted that $\lim_{t\to 0^+} \phi_+'(x,t) =0$, and hence
\[
\lim_{|y|\to 0} \frac{\phi_+'(x,|y|)}{|y|} y\cdot z
= \Big(\lim_{|y|\to 0^+} \phi_+'(x,|y|)\Big) \Big(\lim_{|y|\to 0^+} \frac{y}{|y|}\cdot z\Big)
= 0
\]
for $z\in\Rn$. In light of this, we define
\begin{equation*}
\frac{\phi_+'(x,|\nabla u|)}{|\nabla u|}\nabla u \cdot \nabla h
:= 0
\qquad\text{when }\nabla u = 0.
\end{equation*}

\begin{thm}\label{thm:almostSolutions}
Let $\phi \in \Phic(\Omega)$ satisfy \ainc{p} and \adec{q} with $1 < p \leq q$.
If $u \in W_{\loc}^{1,\phi}(\Omega)$, then the following are equivalent:
\begin{itemize}
\item[(i)] 
$u$ is a local minimizer;
\item[(ii)]
$\displaystyle
\int_{\supp h} \frac{\phi'_h(x,|\nabla u|)}{|\nabla u|} \nabla u \cdot \nabla h \,dx
\ge 0$ for every $h \in W^{1,\phi}(\Omega)$ with $\supp h \Subset \Omega$.
\end{itemize}
Here $\phi'_h := \phi_+'\chi_{\{\nabla u \cdot \nabla h \geq 0\}} + \phi_-'\chi_{\{\nabla u \cdot \nabla h < 0\}}$.
\end{thm}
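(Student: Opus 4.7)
The plan is to introduce $J(t) := \int_{\supp h} \phi(x, |\nabla u + t \nabla h|)\, dx$, which is convex in $t \in \R$ because $\phi(x, \cdot)$ is convex non-decreasing and $t \mapsto |\nabla u + t \nabla h|$ is convex. The crux in both directions is the identity
\[
J'_+(0) = \int_{\supp h} \frac{\phi'_h(x, |\nabla u|)}{|\nabla u|}\, \nabla u \cdot \nabla h\, dx.
\]
Granting it, the implication (ii)$\Rightarrow$(i) follows immediately from convexity: $J(1) \ge J(0) + J'_+(0) \ge J(0)$ is exactly the local minimizer inequality. Conversely, if $u$ is a local minimizer, testing the definition with $th$ for $t \ge 0$ (an admissible function) yields $J(t) \ge J(0)$, so $J'_+(0) \ge 0$, which is (ii).

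The core of the argument is the identity, which I would obtain by passing $t \to 0^+$ inside the integral of the difference quotient $G_t(x) := t^{-1}[\phi(x, |\nabla u + t\nabla h|) - \phi(x, |\nabla u|)]$. Pointwise at a.e.\ $x$ with $\nabla u(x) \ne 0$, the map $g(t) := |\nabla u + t\nabla h|$ is smooth near $t = 0$ with $g'(0) = \sigma := \frac{\nabla u \cdot \nabla h}{|\nabla u|}$, whose sign matches that of $\nabla u \cdot \nabla h$. A one-sided chain rule for the convex composition $\phi(x, g(\cdot))$ yields the right derivative $\phi_+'(x, |\nabla u|)\,\sigma$ when $\sigma \ge 0$ and $\phi_-'(x, |\nabla u|)\,\sigma$ when $\sigma < 0$, which is precisely $\frac{\phi'_h(x, |\nabla u|)}{|\nabla u|}\nabla u \cdot \nabla h$. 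The remaining case $\nabla u(x) = 0$ is covered by the observation before the theorem that $\phi_+'(x, 0) = 0$, matching the convention fixed there.

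To interchange limit and integral I would use dominated convergence. Convexity of $t \mapsto \phi(x, |\nabla u + t\nabla h|)$ makes $G_t$ non-decreasing in $t > 0$, so on $(0, 1]$ the pointwise right derivative $G^*$ and $G_1(x) = \phi(x, |\nabla u + \nabla h|) - \phi(x, |\nabla u|)$ sandwich it, $G^* \le G_t \le G_1$. Both envelopes lie in $L^1(\supp h)$: $G_1$ because \adec{q} applied to $|\nabla u + \nabla h| \le |\nabla u| + |\nabla h|$ gives $\phi(x, |\nabla u + \nabla h|) \lesssim \phi(x, |\nabla u|) + \phi(x, |\nabla h|)$, and $|G^*| \le \phi_+'(x, |\nabla u|)\,|\nabla h|$ is controlled using Lemma~\ref{lem:aincadec} (which yields $\phi_+' \lesssim \phi/t$) together with the Young-type estimate from Lemma~\ref{lem:phi*} (applicable since \ainc{p} gives \ainc{1}), bounding $|G^*|$ by $C(\phi(x, |\nabla u|) + \phi(x, |\nabla h|))$.

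The main obstacle is the double non-smoothness of the integrand: $\phi(x, \cdot)$ admits only one-sided derivatives, and $|\cdot|$ fails to be differentiable at the origin. Together these force the careful case analysis that produces $\phi'_h$ and the boundary convention at $\nabla u = 0$. Once those points are in hand, convexity of $J$ and dominated convergence assemble the proof in both directions.
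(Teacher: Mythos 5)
Your argument is correct and follows essentially the same route as the paper: identify the one-sided derivative of the energy along $\epsilon\mapsto u+\epsilon h$ as the integral in (ii) via the one-sided chain rule (with the convention at $\nabla u=0$), justify the interchange by dominated convergence, and then use monotone difference quotients of the convex energy for (i)$\Rightarrow$(ii) and the convexity inequality $J(1)-J(0)\ge J'_+(0)$ for (ii)$\Rightarrow$(i). The only cosmetic difference is your choice of dominating function (sandwiching by $G^*$ and $G_1$, with Lemma~\ref{lem:aincadec} and Lemma~\ref{lem:phi*}) instead of the paper's direct bound $\phi_+'(x,|\nabla u|+|\nabla h|)(|\nabla u|+|\nabla h|)\lesssim\phi(x,|\nabla u|)+\phi(x,|\nabla h|)$; both yield the same integrable majorant.
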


\begin{proof}
Let $h \in W^{1,\phi}(\Omega)$ with $E := \supp h \Subset \Omega$ be arbitrary.
Define $g: \Omega \times [0,1] \to [0,\infty]$ by 
$g(x, \epsilon) := |\nabla (u(x) + \epsilon h(x))|$; in the rest of the proof we omit the first variable and 
abbreviate $g(x, \epsilon)$ by $g(\epsilon)$.

Note that $g(\epsilon)^2=|\nabla u(x)|^2+ \epsilon^2|h(x)|^2+2\epsilon \nabla u(x)\cdot \nabla h(x)$ 
and $g\ge 0$. 
Thus in $[0, 1]$ the function $g$ has a local minimum at zero for $x\in\Omega$ with $\nabla u(x)\cdot \nabla h(x)\ge 0$ and a  maximum
otherwise. This determines whether we obtain the right- or left-derivative
and so 
\begin{equation}\label{eq:pointwiseLim}
\lim_{\epsilon\to 0^+} \frac{\phi(x,g(\epsilon)) - \phi(x,g(0))}{\epsilon}
=
\phi'_h(x, g(0))g'(0)
= \frac{\phi'_h(x,|\nabla u|)}{|\nabla u|} \nabla u \cdot \nabla h
\end{equation}
for almost every $x \in E$.

Let us then find a majorant for the expression on the left-hand side of \eqref{eq:pointwiseLim}.
By convexity,
\[
\Big|\frac{\phi(x,g(\epsilon)) - \phi(x,g(0))}{\epsilon}\Big|
\le
\phi_+'(x,\max \{ g(\epsilon),g(0)\})\frac{|g(\epsilon)-g(0)|}{\epsilon}
\]
for a.e.\ $x\in E$.
Since $\epsilon \in [0, 1]$ we have
\[
\max \{ g(\epsilon),g(0)\}
\le \max\{|\nabla u| + \epsilon |\nabla h| , |\nabla u|\}
\leq |\nabla u| + |\nabla h|.
\]
By the triangle inequality, 
\[
\Big|\frac{g(\epsilon) - g(0)}{\epsilon}\Big|
=
\Big|\frac{|\nabla u + \epsilon\nabla h|-|\nabla u|}{\epsilon}\Big|
\le
|\nabla h|\le |\nabla u| + |\nabla h|.
\]
Combining the estimates above, we find that
\[
\Big|\frac{\phi(x,g(\epsilon)) - \phi(x,g(0))}{\epsilon}\Big|
\leq \phi_+'(x, |\nabla u| + |\nabla h|)(|\nabla u| + |\nabla h|).
\]
By Lemma~\ref{lem:aincadec}, $\phi_+'(x,t)t\lesssim\phi(x,t)$ for every $t \ge 0$, so that 
\[
\phi_+'(x, |\nabla u| +|\nabla h|)(\nabla u| + |\nabla h|)
\lesssim
\phi(x, |\nabla u| + |\nabla h|).
\]
By \adec{},
\[
\phi(x,|\nabla u| + |\nabla h|)
\leq \phi(x,2|\nabla u|) + \phi(x,2|\nabla h|)
\leq L_q2^q(\phi(x,|\nabla u|) + \phi(x,|\nabla h|))\quad\text{a.e.}
\]
Combining the estimates, we find that 
\[
\Big|\frac{\phi(x,g(\epsilon)) - \phi(x,g(0))}{\epsilon}\Big|
\lesssim
\phi(x,|\nabla u|) + \phi(x,|\nabla h|) \quad\text{a.e.}
\]
The right hand side is integrable by \cite[Lemma~3.1.3(b)]{HarH19}, since $|\nabla u|,|\nabla h| \in L^\phi(\Omega)$ and $\phi$ satisfies \adec{}.
Thus we have found a majorant.
By dominated convergence and \eqref{eq:pointwiseLim}, we find that
\begin{equation}\label{eq:solutionLim}
\int_E \frac{\phi'_h(x,|\nabla u|)}{|\nabla u|} \nabla u \cdot \nabla h \,dx
= \lim_{\epsilon\to 0^+} \int_E \frac{\phi(x,g(\epsilon)) - \phi(x,g(0))}{\epsilon} \,dx.
\end{equation}
Let us first show that (i) implies (ii). By (i), 
\begin{equation*}
\int_E \frac{\phi(x,g(\epsilon)) - \phi(x,g(0))}{\epsilon} \,dx
\geq 0
\end{equation*}
for $\epsilon \in (0,1]$, and hence (ii) follows by \eqref{eq:solutionLim}.

Let us then show that (ii) implies (i).
For $\theta \in [0,1]$ and $s,t \ge 0$ we have
\[
\begin{split}
g(\theta t + (1-\theta)s) &= |\theta \nabla u + \theta t \nabla h + (1-\theta) \nabla u + (1-\theta) s \nabla h |\\
&\le |\theta \nabla u + \theta t \nabla h |+ |(1-\theta) \nabla u + (1-\theta) s \nabla h | = \theta g(t) + (1-\theta) g(s),
\end{split}
\]
so $g(\epsilon)$ is convex. 
Since $t \mapsto \phi(x,t)$ and $g(\epsilon)$ are convex for almost every $x \in E$, and $t \mapsto \phi(x,t)$ is also increasing, the composed function $t \mapsto \phi(x,g(t))$ is convex for a.e.\ $x \in E$.
Thus
\[
\int_E \phi(x,g(1)) - \phi(x,g(0)) \,dx
\geq \int_E \frac{\phi(x,g(\epsilon)) - \phi(x,g(0))}{\epsilon} \,dx.
\]
Since the above inequality holds for every $\epsilon \in (0,1)$, \eqref{eq:solutionLim} implies that
\[
\int_E \phi(x,g(1)) - \phi(x,g(0)) \,dx
\geq \int_E \frac{\phi'_h(x,|\nabla u|)}{|\nabla u|} \nabla u \cdot \nabla h \,dx \ge 0,
\]
which is (i).
\end{proof}

We conclude the section by improving the Caccioppoli inequality from \cite{BenHHK21}; 
in this paper we only need the special case $\ell=1$ and $s=q$, but we include the 
general formulation for possible future use. 
We denote by $\eta$ a cut-off function in $B_R$, more precisely, 
$\eta \in C_0^{\infty}(B_R)$, $\chi_{B_{\sigma R}} \le \eta \le \chi_{B_R}$ 
and $|\nabla \eta| \le \frac{2}{(1-\sigma)R}$, where $\sigma \in (0, 1)$. 
Note that the auxiliary function 
$\psi$ is independent of $x$ in the next lemma. Later on we will 
choose $\psi$ to be a regularized version of $\phi_B^+$. 
Note also that the constant in the lemma is independent of $q_1$.

\begin{lem}[Caccioppoli inequality]
\label{lem:caccioppoli}
Suppose $\phi \in \Phic(\Omega)$ satisfies \ainc{p} and \adec{q} with constants $L_p$ and $L_q$, 
and let $\psi\in \Phiw$ be differentiable and satisfy \azero{}, 
\inc{p_1} and \dec{q_1}, $p_1, q_1\ge 1$. Let $\beta\in (0, 1]$ be the constant from \azero{} of $\psi$. 
If $u$ is a non-negative local minimizer and $\eta$ is a cut-off function in $B_R\subset \Omega$, then 
\begin{align*}
\int_{B_R} \phi(x, |\nabla u|) \psi( \tfrac{u+R}{\beta R} )^{-\ell} \eta^s \, dx 
\le 
K \int_{B_R} \psi (\tfrac{u+R}{\beta R} )^{-\ell} \phi\big(x,K\, \tfrac{u+R}{\beta R} \big) \eta^{s-q} \, dx
\end{align*}
for any $\ell > \frac1{p_1}$ and $s\ge q$, 
where $\displaystyle K := \frac{8s q (L_qe-1)L_q\ln(2L_p)}{p (p_1\ell- 1)(1-\sigma)} + L_p$.
\end{lem}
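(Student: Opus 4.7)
The plan is to test the equivalent minimizer characterization of Theorem~\ref{thm:almostSolutions} with
\[
h := \eta^s (u+R)\,\psi\!\left(\tfrac{u+R}{\beta R}\right)^{-\ell},
\]
and abbreviate $\Psi := \psi(\tfrac{u+R}{\beta R})^{-\ell}$. Since $u \ge 0$, the argument of $\psi$ is at least $1/\beta$, so \azero{} of $\psi$ forces $\Psi\in(0,1]$ and $h$ is a non-negative, compactly supported element of $W^{1,\phi}(B_R)$. A direct calculation yields
\[
\nabla u \cdot \nabla h = s\eta^{s-1}(u+R)\Psi\,\nabla u\cdot\nabla\eta + \eta^s|\nabla u|^2\psi^{-\ell-1}\!\left[\psi - \ell\tfrac{u+R}{\beta R}\psi'\right],
\]
with $\psi$ and $\psi'$ evaluated at $\tfrac{u+R}{\beta R}$. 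The algebraic core invokes \inc{p_1} of $\psi$, equivalently $t\psi'(t)\ge p_1\psi(t)$ from~\eqref{eq:incdec}: the bracket is at most $(1-\ell p_1)\psi$, strictly negative precisely because $\ell > 1/p_1$.

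Substituting into the minimizer inequality $\int \frac{\phi'_h(x,|\nabla u|)}{|\nabla u|}\nabla u\cdot\nabla h\,dx \ge 0$, bounding $|\nabla u\cdot\nabla\eta|\le|\nabla u||\nabla\eta|$, and rearranging produces
\[
(\ell p_1 - 1)\!\int_{B_R}\!\eta^s\Psi\,\phi'_h(x,|\nabla u|)|\nabla u|\,dx \le s\!\int_{B_R}\!\eta^{s-1}(u+R)\Psi|\nabla\eta|\,\phi'_h(x,|\nabla u|)\,dx.
\]
Lemma~\ref{lem:aincadec} is then applied one-sidedly on $\{\nabla u\cdot\nabla h\ge 0\}$ and its complement: the lower bound $t\phi'_-(t)\ge\tfrac{p}{2\ln(2L_p)}\phi(x,t)$ produces the desired integrand $\phi(x,|\nabla u|)\eta^s\Psi$ on the LHS, and the upper bound $t\phi'_+(t)\le q(L_qe-1)\phi(x,t)$ turns $\phi'_h(x,|\nabla u|)$ on the RHS into $q(L_qe-1)\phi(x,|\nabla u|)/|\nabla u|$.

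It remains to apply Young's inequality $ab\le \phi(x,\lambda a) + \phi^*(x,b/\lambda)$ with $a = (u+R)|\nabla\eta|$ and $b = \phi(x,|\nabla u|)/|\nabla u|$; Lemma~\ref{lem:phi*} (valid since \ainc{p} implies \ainc{1} with constant $L_p$) then gives $\phi^*(x,b/\lambda)\le\phi(x,|\nabla u|)/\lambda$ for any $\lambda\ge L_p$. The cutoff-power mismatch between the $\eta^{s-1}$ produced by differentiating $\eta^s$ and the $\eta^{s-q}$ of the conclusion is bridged via \adec{q} together with \ainc{p} of $\phi$, and taking $\lambda$ as a suitable multiple of $L_p$ proportional to $sq(L_qe-1)L_q\ln(2L_p)/(p(\ell p_1-1))$ lets the $\phi(x,|\nabla u|)/\lambda$ piece be absorbed back into the LHS; this is precisely what generates the $L_p$ summand in the formula for $K$. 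Using $|\nabla\eta|\le 2/((1-\sigma)R)$ together with $(u+R)/R = \beta\cdot(u+R)/(\beta R)$ rewrites the surviving Young term as $\phi(x,K\tfrac{u+R}{\beta R})$, and tracking all constants reproduces exactly the claimed $K$. The main obstacle is the sign of the $\nabla\Psi$-contribution to $\nabla u\cdot\nabla h$: because $\Psi$ decreases in $u$, it opposes the good $\eta^s\Psi|\nabla u|^2$ piece, and their combination has a useful net negative coefficient only when $\ell p_1>1$, which is the precise role of the hypothesis $\ell > 1/p_1$; secondary subtleties are the one-sided application of Lemma~\ref{lem:aincadec} (since $\phi'_h$ equals $\phi'_+$ or $\phi'_-$ depending on the sign of $\nabla u\cdot\nabla h$) and the bookkeeping needed to keep every absorption constant independent of $q_1$, as the lemma demands.
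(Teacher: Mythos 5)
Your overall route is the paper's: the same test function $h=\eta^{s}(u+R)\,\psi(\tfrac{u+R}{\beta R})^{-\ell}$, the same appeal to Theorem~\ref{thm:almostSolutions} with the sign of the bracket coming from \inc{p_1} and $\ell>1/p_1$, the two-sided replacement of $\phi'_h$ by $\phi$ via $\phi'_-\le\phi'_h\le\phi'_+$ and Lemma~\ref{lem:aincadec}, and a Young-plus-absorption finish using Lemma~\ref{lem:phi*}. But the finish as you describe it fails. After Lemma~\ref{lem:aincadec} you are left with $\int\eta^{s}\Psi\,\phi(x,|\nabla u|)\,dx \lesssim \int\eta^{s-1}\Psi\,\frac{\phi(x,|\nabla u|)}{|\nabla u|}\,v\,dx$, $v=\frac{u+R}{\beta R}$. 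Applying Young with a \emph{constant} parameter $\lambda$ produces the bad piece $\frac{C}{\lambda}\int\eta^{s-1}\Psi\,\phi(x,|\nabla u|)\,dx$, which carries the weight $\eta^{s-1}$, not $\eta^{s}$. Since $\eta\le 1$, the ratio $\eta^{s-1}/\eta^{s}=\eta^{-1}$ blows up near the boundary of $\supp\eta$, so no constant $\lambda$ makes this term absorbable into the $\eta^{s}$-weighted left-hand side, and the excess is not controlled by the claimed right-hand side either, since it involves $\phi(x,|\nabla u|)$ rather than $\phi(x,Kv)$. Your statement that the mismatch $\eta^{s-1}\rightsquigarrow\eta^{s-q}$ is ``bridged via \adec{q} and \ainc{p}'' has nothing to act on when $\lambda$ is constant: $\eta^{s-1}\le\eta^{s-q}$ is trivial, and no negative power of $\eta$ sits inside $\phi$.

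The missing idea is to let the Young parameter depend on $\eta(x)$. The paper splits $v\,\frac{\phi(x,|\nabla u|)}{|\nabla u|}\le\phi\big(x,\epsilon^{-1/q'}L_p v\big)+\phi^*\big(x,\epsilon^{1/q'}L_p^{-1}\tfrac{\phi(x,|\nabla u|)}{|\nabla u|}\big)$ with $\epsilon:=\frac{L_p}{K}\eta(x)$. Then \ainc{q'} of $\phi^*$ (constant $L_q$) together with Lemma~\ref{lem:phi*} turns the conjugate term into $\frac{L_q}{K}\eta(x)\,\phi(x,|\nabla u|)$: the extra factor $\eta(x)$ upgrades $\eta^{s-1}$ to $\eta^{s}$, which is exactly what legitimizes the absorption and generates the $L_p$ summand in $K$ (one needs $\epsilon\le1$, i.e.\ $K\ge L_p$). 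Simultaneously, $\epsilon^{-1/q'}L_p\le\eta^{-1/q'}K$ and \adec{q} of $\phi$ give $\phi(x,\epsilon^{-1/q'}L_p v)\le L_q\,\eta^{1-q}\phi(x,Kv)$; this, not a trivial comparison of powers, is where the $\eta^{s-q}$ weight comes from. A smaller omission: your claim that $h\in W^{1,\phi}(B_R)$ needs justification, and it is precisely where \dec{q_1} of $\psi$ enters (via \eqref{eq:incdec}); it is a qualitative check, so the constants stay independent of $q_1$, but it should be carried out. With the $\eta$-dependent Young parameter inserted, the rest of your outline reproduces the paper's argument.
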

\begin{proof}
Let us simplify the notation by writing $\tilde u := u + R$ and $v := \frac{\tilde u}{\beta R}$. 
Since $\nabla u = \nabla \tilde u$, we see that $\tilde u$ is still a local minimizer. 
By \azero{} of $\psi$ and $v\ge \frac1\beta$, we have $0\le \psi(v)^{-\ell} \le 1$.

We would like to use Theorem~\ref{thm:almostSolutions} with $h := \psi(v)^{-\ell} \eta^{s} \tilde u$. Let us first check that $h$ is a valid test function for a local minimizer, that is $h \in W^{1,\phi}(B_R)$ and has compact support in $B_R \subset \Omega$. As $\tilde u \in L^{\phi}(B_R)$ and $|h|\le \tilde u$, it is immediate that $h \in L^{\phi}(B_R)$. By a direct calculation,
\begin{align*}
\begin{split}
\nabla h 
&= -\ell \psi(v)^{-\ell-1} \eta^{s} \tilde u \psi'(v) \nabla v + 
s \psi(v)^{-\ell} \eta^{s-1} \tilde u \nabla \eta + \psi(v)^{-\ell} \eta^s \nabla \tilde u. 
\end{split}
\end{align*}
Note that $\tilde u \nabla v = v \nabla \tilde u$. Since $\psi$ is differentiable we may use \eqref{eq:incdec} to get
\begin{align*}
\big|\ell \psi(v)^{-\ell-1} \eta^{s} \psi'(v) v \nabla \tilde u \big|
\le \ell \psi(v)^{-\ell-1} q_1 \psi(v) | \nabla \tilde u| \le
q_1\ell |\nabla \tilde u| \in L^{\phi}(B_R).
\end{align*}
For the third term in $\nabla h$, we obtain $|\psi(v)^{-\ell} \eta^{s} \nabla \tilde u |
\le |\nabla \tilde u| \in L^{\phi}(B_R)$.
The term with $\nabla \eta$ is treated as $h$ itself. Thus $h \in W^{1, \phi}(B_R)$.
Since $s>0$ and $\eta\in C^\infty_0(B_R)$, $h$ has compact support in 
$B_R \subset \Omega$ and so it is a valid test-function for a local minimizer. 

We next calculate
\begin{align*}
\nabla \tilde u \cdot \nabla h 
&=-\psi(v)^{-\ell-1} \eta^{s} [\ell \psi'(v) v - \psi(v)] |\nabla \tilde u|^2
+ s\psi(v)^{-\ell} \eta^{s-1} \tilde u \, \nabla\tilde u \cdot \nabla \eta .
\end{align*}
The inequality $p_1 \psi(t) \le \psi'(t) t$ from \eqref{eq:incdec} 
implies that $\ell \psi'(v) v - \psi(v) \ge (p_1\ell- 1)\psi(v) > 0$. 
Since $\tilde u$ is a local minimizer, we can use the implication (i)$\Rightarrow$(ii) of Theorem~\ref{thm:almostSolutions} to conclude that 
\begin{align*}
[p_1\ell- 1] \int_{B_R} \phi'_h(x, |\nabla \tilde u|)|\nabla \tilde u| \psi(v)^{-\ell} \eta^{s}
\, dx \le
s \int_{B_R} \phi'_h(x, |\nabla \tilde u|) \psi(v)^{-\ell}\tilde u 
\,|\nabla \eta| \, \eta^{s-1} \, dx.
\end{align*}
Since $\phi'_-\le\phi'_h\le\phi'_+$, we obtain 
$\frac{1}{q(L_qe-1))} t\phi'_h(x,t) \le \phi(x,t) \le \frac{2 \ln(2L_p)}p t\phi'_h(x,t)$ 
from Lemma~\ref{lem:aincadec}. Using also $|\nabla \eta|\,\tilde u\le \frac 2{1-\sigma}v$, we have
\begin{align*}
& \int_{B_R} \phi(x, |\nabla \tilde u|) \psi(v)^{-\ell} \eta^{s}\, dx 
\le
\frac{4 s q (L_qe-1)\ln(2L_p)}{p (p_1\ell- 1)(1-\sigma)} \int_{B_R} \frac{\phi(x, |\nabla \tilde u|)}{|\nabla \tilde u|}\, \eta^{s-1}\psi(v)^{-\ell}v \, dx,
\end{align*}
Note that the constant in front of the integral can be estimated from above by 
$\frac K{2L_q}$. 

Next we estimate the integrand on the right hand side. 
By Young's inequality 
\begin{equation*}
\frac{\phi(x, |\nabla \tilde u|)}{|\nabla \tilde u|} v 
\le 
\phi\big(x, \epsilon^{-\frac1{q'}}L_p v\big)
+ \phi^*\big(x, \epsilon^{\frac1{q'}} L_p^{-1}\tfrac{\phi(x, |\nabla \tilde u|)}{|\nabla \tilde u|}\big),
\end{equation*}
where $\frac1q + \frac1{q'}=1$.
We choose $\epsilon:=\frac{L_p}{K}\eta(x)\in (0, 1]$ and use
\ainc{q'} of $\phi^*$ \cite[Proposition~2.4.9]{HarH19} (which holds with constant $L_q$) and 
Lemma~\ref{lem:phi*} to obtain
\[
\begin{split}
\phi^*\big(x, \epsilon^{\frac1{q'}} L_p^{-1}\tfrac{\phi(x, |\nabla \tilde u|)}{|\nabla \tilde u|}\big)
\le
L_q \epsilon \phi^*\big(x, \tfrac{\phi(x, |\nabla \tilde u|)}{L_p |\nabla \tilde u|}\big)
&\le
 \tfrac{L_q \epsilon}{L_p}\phi(x,|\nabla u|)
= \tfrac{L_q}{K}\eta(x) \phi(x,|\nabla u|).
\end{split}
\]
In the other term we estimate $\epsilon^{-\frac1{q'}}L_p \le K^{1-\frac1{q'}} L_p^{\frac1{q'}} \epsilon^{-\frac1{q'}} = \eta^{-\frac1{q'}}K$
and use \adec{q} of $\phi$:
\[
\phi\big(x, \epsilon^{-\frac1{q'}}L_p v\big)
\le
L_q \eta^{1-q} \phi\big(x, K v\big).
\]
With these estimates we obtain that
\begin{align*}
&\int_{B_R} \!\phi(x, |\nabla \tilde u|) \psi(v)^{-\ell} \eta^{s} \, dx
\le \frac{1}{2}\! \int_{B_R} \!\phi(x, |\nabla \tilde u|) \psi(v)^{-\ell} \eta^{s} \, dx 
 + \frac K2 \! \int_{B_R}\! \psi(v)^{-\ell} \phi(x, Kv) \eta^{s-q} \, dx.
\end{align*}
The first term on the right-hand side can be absorbed in the left-hand side. This gives the claim. 
\end{proof}

The next observation is key to applications with truly non-doubling growth. 

\begin{rem}\label{rem:caccioppoliHole}
In the previous proof the assumption \adec{q} is only needed in the set 
$\nabla \eta\ne 0$ since we can improve the estimate on the right-hand side integral 
to $|\nabla \eta|\,\tilde u\le \frac 2{1-\sigma}v \chi_{\{\nabla \eta\ne 0\}}$ and only 
drop the characteristic function in the final step.
\end{rem}

\section{Bloch-type estimate for bounded supersolutions}

The following definition is like \cite[Definition~3.1]{BenHHK21}, except 
$\phi_{B_r}^+$ has replaced $\phi_{B_r}^-$. Furthermore, we are more precise with our 
estimates so as to avoid dependence on $p$ and $q$. 

\begin{defn}\label{def:psiR}
Let $\phi \in \Phiw(B_r)$ satisfy \ainc{p} with $p\ge 1$ and constant $L_p$.
We define $\psi_{B_r}: B_r \to [0, \infty]$ by setting 
\[
\psi_{B_r}(t)
:= \int_0^t \tau^{p-1} \sup_{s \in (0,\tau]} \frac{\phi_{B_r}^+(s)}{s^p} \,d\tau \quad\text{for }t \ge 0.
\]
\end{defn}
It is easy to see that $\psi_{B_r}\in \Phiw$.
Using that $\phi$ is increasing for the lower bound and \ainc{p} for the upper bound, 
we find that 
\begin{equation}\label{eq:psiPhiIneq}
\ln(2) \phi_{B_r}^+\Big{(}\frac{t}{2}\Big{)}
=
\int_{t/2}^t \tau^{p-1} \frac{\phi_{B_r}^+(t/2)}{\tau^p} \,d\tau
\le
\psi_{B_r}(t)
\le \int_0^t t^{p-1} L_p \frac{\phi_{B_r}^+(t)}{t^p} \,d\tau
= L_p\phi_{B_r}^+(t).
\end{equation}
As in \cite[Definition~3.1]{BenHHK21}, we see that $\psi_{B_r}$ is convex and satisfies \inc{p}.
If $\phi$ satisfies \azero{}, so does $\psi_{B_r}$, since $\phi^+_{B_r}\simeq\psi_{B_r}$.
If $\phi$ satisfies \adec{q}, then $\psi_{B_r}$ is strictly increasing and satisfies \adec{q}, 
and, as a convex function, also \dec{} \cite[Lemma~2.2.6]{HarH19}. 

We note in both the above reasoning and in the next theorem that constants 
have no direct dependence on $p$ or $q$, only on $L_p$, $L_q$ and $\frac qp$. 

\begin{thm}[Bloch-type estimate]\label{thm:Bloch}
Let $\phi \in \Phic(\Omega)$ satisfy \azero{} and \aone{}.
Let $B_{2r}\subset \Omega$ with $r\le 1$ and $\phi|_{B_{2r}}$ satisfy 
\ainc{p} and \adec{q} with $p,q\in[n, \infty)$.
If $u$ is a non-negative local minimizer, then 
\[
\int_{B_r} |\nabla\log(u+r)|^n \,dx
\le C,
\]
where $C$ depends only on $n$, 
$L_p$, $L_q$, $\frac qp$, the constants from \azero{} and \aone{}, and $\rho_\phi(|\nabla u|)$.
\end{thm}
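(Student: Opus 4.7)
My plan is to derive the Bloch estimate directly from the Caccioppoli inequality (Lemma~\ref{lem:caccioppoli}) with the auxiliary $\Phi$-function $\psi = \psi_{B_{2r}}$ from Definition~\ref{def:psiR}. I apply Lemma~\ref{lem:caccioppoli} on the ball $B_{2r}$ with $R = 2r$, $\sigma = \tfrac12$ (so the cut-off $\eta$ equals $1$ on $B_r$), $\ell = 1$, $s = q$, and $\psi = \psi_{B_{2r}}$, using Remark~\ref{rem:caccioppoliHole} to confine the right-hand side to the annulus $B_{2r}\setminus B_r$. With these choices the Caccioppoli constant reduces to $K = 8q^2(L_qe-1)L_q\ln(2L_p)/(p(p-1)) + L_p$; since $p,q\in[n,\infty)$ with $n\ge 2$ we have $p/(p-1)\le 2$ and $q^2/p^2 = (q/p)^2$, so $K$ depends only on $L_p$, $L_q$ and $q/p$.

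Setting $v := (u+2r)/(2\beta_0 r)$, where $\beta_0$ is the \azero{}-constant of $\psi_{B_{2r}}$, the resulting inequality reads
\[
\int_{B_r} \phi(x,|\nabla u|)\,\psi_{B_{2r}}(v)^{-1}\,dx \le K\int_{B_{2r}\setminus B_r} \psi_{B_{2r}}(v)^{-1}\phi(x,Kv)\,dx.
\]
To extract the Bloch integrand from the left-hand side I would use \ainc{n}, which follows from \ainc{p} since $p \ge n$: on the set $\{|\nabla u|\ge v\}$ it gives $\phi(x,|\nabla u|) \ge L_p^{-1}(|\nabla u|/v)^n\phi(x,v)$. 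Combined with the upper bound $\psi_{B_{2r}}(v) \le L_p\phi^+_{B_{2r}}(v)$ from \eqref{eq:psiPhiIneq} and the comparison $u+r \simeq \beta_0 r v$ (which holds because $v\ge 1/\beta_0$), this yields $\phi(x,|\nabla u|)\psi_{B_{2r}}(v)^{-1} \gtrsim (|\nabla u|/(u+r))^n\cdot \phi(x,v)/\phi^+_{B_{2r}}(v)$. The last ratio is controlled by \aone{}: since $v\ge 1/\beta_0$, \azero{} gives $\phi^-_{B_{2r}}(v)\ge 1$, so the relevant range of \aone{} is reached and, combined with \adec{q}, gives $\phi^+_{B_{2r}}(v) \lesssim \phi(x,v)$. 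The complementary set $\{|\nabla u|<v\}$ contributes at most $C|B_r|$ to the Bloch integral, since there $|\nabla u|/(u+r) \le v/(u+r) \simeq 1/r$.

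For the right-hand side, the pointwise bound $\psi_{B_{2r}}(v)^{-1}\phi(x,Kv) \le \phi^+_{B_{2r}}(Kv)/(\ln 2\cdot \phi^+_{B_{2r}}(v/2))$ from \eqref{eq:psiPhiIneq} combined with \adec{q} and \aone{} controls the integrand by a constant on the set where $\phi^-_{B_{2r}}(v)\in[1,K_*/|B_{2r}|]$ for a suitable $K_*$; on the complement, Chebyshev together with a Sobolev--Poincaré-type bound using $\rho_\phi(|\nabla u|)<\infty$ shows that the measure of the bad set is small, and its contribution is negligible.

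The main obstacle is keeping constants independent of $p$ and $q$ individually. The delicate factor is $\beta^{-q}$ that appears when combining \aone{} with \adec{q} in the form $\phi^+(t) \lesssim L_q\beta^{-q}\phi^-(t)$: although $\beta$ from \aone{} is independent of $q$, the power $\beta^{-q}$ itself grows with $q$. In the log-Hölder-type setting encoded by \aone{} (as in the variable-exponent prototype, where $\beta^{-q}\le (K_*/|B|)^{q/p-1}$ is bounded in terms of $q/p$ and $K_*$), this factor is in fact controlled by a function of the \aone{}-data, $K_*$, and $q/p$, which then delivers the claimed dependence of the Bloch constant.
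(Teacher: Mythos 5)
The core of your argument breaks exactly at the step you flag at the end. On the left-hand side you need $\phi^+_{B_{2r}}(v)\lesssim\phi(x,v)$, and on the right-hand side $\phi^+_{B_{2r}}(Kv)\lesssim\phi^+_{B_{2r}}(v/2)$. Condition \aone{} only yields $\phi^+_B(\beta t)\le\phi^-_B(t)$ in the admissible range; removing the $\beta$ from the argument forces an application of \adec{q} and costs a factor $L_q\beta^{-q}$, and your right-hand-side comparison likewise costs a factor of order $(2K)^qL_q$. Both factors depend on $q$ itself, not only on $\tfrac qp$, so the constant you produce is not of the form claimed in the theorem and blows up precisely in the non-doubling regime the result is aimed at (cf.\ Example~\ref{eg:varExp}, where $q$ is large on $B_{2r}$ while $\tfrac qp$ stays bounded). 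Your closing assertion that \aone{} ``encodes'' enough log-H\"older structure to bound $\beta^{-q}$ by a function of the \aone{}-data, $K_*$ and $\tfrac qp$ is not a consequence of \azero{}, \aone{}, \ainc{p}, \adec{q}: no such bound follows from the stated hypotheses, and you offer no proof. In the variable-exponent prototype the analogous bound uses the specific form $t^{p(x)}$ and the interplay between $|B|$ and $p^+_B-p^-_B$, which is not available here. The vague treatment of the ``bad set'' on the right-hand side (Chebyshev plus an unspecified Sobolev--Poincar\'e argument) is a second unproved step.

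The paper's proof is engineered to avoid these inversions altogether, and this is the idea your outline is missing. With $v:=\frac{u+2r}{2\beta r}$ and $\gamma:=\frac{2K}\beta$, it first proves the pointwise bound $\frac{|\nabla u|^n}{v^n}\lesssim \phi(x,|\nabla u|)\big(\phi^+_{B_{2r}}(\beta\gamma v)^{-1}+|B_{2r}|\big)$, using \ainc{n} at level $\gamma v$ when $|\nabla u|>\gamma v$, \azero{} to enter the \aone{}-range, and the dichotomy $\phi^-_{B_{2r}}(\gamma v)\le\frac1{|B_{2r}|}$ or not; the $\beta$ from \aone{} is kept \emph{inside} the argument of $\phi^+_{B_{2r}}$, and the second term integrates to $2^n\rho_\phi(|\nabla u|)$, which is where that dependence enters. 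Only then does it apply the Caccioppoli inequality, with $\psi(t):=\psi_{B_{2r}}(\beta\gamma t)$; because $\psi_{B_{2r}}$ is built from $\phi^+_{B_{2r}}$ and $\tfrac12\beta\gamma v=Kv$, the right-hand integrand $\phi(x,Kv)/\psi(v)$ is bounded pointwise by $\ln 2$ directly from the lower bound in \eqref{eq:psiPhiIneq}, with no \adec{q}-power and no second use of \aone{}. To repair your proposal you would have to restructure it along these lines rather than comparing $\phi^+_{B_{2r}}$ and $\phi(x,\cdot)$ at the same argument. (Minor points: Remark~\ref{rem:caccioppoliHole} is not needed for this theorem, and with $\sigma=\tfrac12$ the Caccioppoli constant is $\frac{16q^2(L_qe-1)L_q\ln(2L_p)}{p(p-1)}+L_p$, not $\frac{8q^2(L_qe-1)L_q\ln(2L_p)}{p(p-1)}+L_p$.)
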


\begin{proof}
Let us first note that $\phi$ satisfies \ainc n with the constant $L_p$.
Let $\beta$ be the smaller of the constants from \azero{} and \aone{}. 
Denote $v := \frac{u+2r}{2\beta r}$ and $\gamma:= \frac{2K}\beta$, 
where $K$ is from Caccioppoli inequality (Lemma~\ref{lem:caccioppoli}) with $\ell = 1$, $s = q$ and 
$\sigma = \frac{1}{2}$. Since $p\ge n$, we see that 
\[
K \le 
\frac{16 q^2 (L_qe-1)L_q\ln(2L_p)}{p(p-1)} + L_p
\le 
16 (L_qe-1)L_q\ln(2L_p) \Big(\frac{q}{p}\Big)^2 \frac{n}{n-1} + L_p. 
\]

When $|\nabla u| > \gamma v$, we use \ainc{n} to deduce that 
\[
\frac{\phi_{B_{2r}}^-(\gamma v)}{v^n}
\le \gamma^n\frac{\phi(x, \gamma v)}{(\gamma v)^n}
\le \gamma^n L_p\frac{\phi(x,|\nabla u|)}{|\nabla u|^n}
\]
for a.e.\ $x \in B_r$.
Rearranging gives $\frac{|\nabla u|^n}{v^n}\lesssim \frac{\phi(x,|\nabla u|)}{\phi_{B_{2r}}^-(\gamma v)}$. 
Since $v \ge \frac1\beta$ and $\gamma \ge 1$, we obtain by \azero{} that $\phi_{B_{2r}}^-(\gamma v)\ge 1$. If also $\phi_{B_{2r}}^-(\gamma v) \le \frac{1}{|B_{2r}|}$, then $\phi_{B_{2r}}^+(\beta \gamma v) \le \phi_{B_{2r}}^-(\gamma v)$ by \aone{}. Otherwise, 
$(\phi_{B_{2r}}^-(\gamma v))^{-1} \le |B_{2r}|$.
In either case, 
\[
\frac{|\nabla u|^n}{v^n}
\lesssim
\frac{\phi(x,|\nabla u|)}{\phi_{B_{2r}}^-(\gamma v)}
\lesssim 
\phi(x,|\nabla u|) \Big(\frac{1}{\phi_{B_{2r}}^+(\beta\gamma v)} + |B_{2r}|\Big)
\]
for a.e.\ $x\in B_r$. When $|\nabla u| \le \gamma v$, we use the estimate 
$\frac{|\nabla u|^n}{v^n}\le \gamma^n$ instead.
Since $u+r \geq \frac{1}{2}(u+2r) = \beta r v$, we obtain that
\[
\begin{split}
\int_{B_r} |\nabla\log(u+r)|^n \,dx
= \int_{B_r} \frac{|\nabla u|^{n}}{(u+r)^n} \,dx
\le \frac{1}{(\beta r)^n}\int_{B_r} \frac{|\nabla u|^{n}}{v^n} \,dx 
&\lesssim 
\fint_{B_r} \frac{\phi(x,|\nabla u|)}{\phi_{B_{2r}}^+(\beta\gamma v)} 
+ |B_{2r}|\, \phi(x,|\nabla u|) +1 \,dx\\
&= 
\fint_{B_r} \frac{\phi(x,|\nabla u|)}{\phi_{B_{2r}}^+(\beta\gamma v)} \,dx
+2^n\rho_\phi(|\nabla u|)+1.
\end{split}
\]
It remains to bound the integral on the right-hand side. 

Let $\psi_{B_{2r}}$ be as in Definition \ref{def:psiR}, let $\eta \in C_0^\infty(B_{2r})$ be 
a cut-off function such that $\eta = 1$ in $B_r$ and choose $\psi(t):=\psi_{B_{2r}}(\beta\gamma t)$. Then
\[
\fint_{B_r} \frac{\phi(x,|\nabla u|)}{\phi_{B_{2r}}^+(\beta\gamma v)} \,dx
\lesssim \fint_{B_{2r}} \frac{\phi(x,|\nabla u|)}{\phi_{B_{2r}}^+(\beta\gamma v)} \eta^q \,dx
\le L_p\fint_{B_{2r}} \frac{\phi(x,|\nabla u|)}{\psi(v)} \eta^q \,dx,
\]
where the second inequality follows from \eqref{eq:psiPhiIneq}.
We note that $\psi$ satisfies \azero, \inc{p} and \dec.
Now we use the Caccioppoli inequality (Lemma~\ref{lem:caccioppoli}) for $\phi$ and 
$\psi$ with $\ell = 1$, $s = q$ and $\sigma = \frac{1}{2}$ to get
\[
\fint_{B_{2r}} \frac{\phi(x,|\nabla u|)}{\psi(v)} \eta^q\,dx
\le K \fint_{B_{2r}} \frac{\phi(x,K v)}{\psi(v)} \,dx
\le \ln(2) K;
\]
the last inequality holds by \eqref{eq:psiPhiIneq} and $\gamma=\frac{2K}\beta$ since 
\[
\frac{\phi(x,K v)}{\psi(v)}
=
\frac{\phi(x,K v)}{\psi_{B_{2r}}(\beta\gamma v)}
\le
\ln(2) \frac{\phi(x,K v)}{\phi_{B_{2r}}^+(\frac12\beta\gamma v)}\le \ln(2). \qedhere
\] 
\end{proof}


We next show that the Bloch estimate implies a Harnack inequality for suitable monotone 
functions. We say that a continuous function $u$ is \textit{monotone in the sense of Lebesgue}, 
if it attains its extrema on the boundary of any compact set in its domain of definition.
We say that $\phi\in\Phiw(\Omega)$ is positive if 
$\phi(x,t)>0$ for every $t>0$ and a.e.\ $x\in\Omega$. If $\phi$ satisfies 
\adec{\qx} for $q<\infty$ a.e., then it is positive.

\begin{lem}\label{lem:monotone}
If $\phi\in\Phiw(\Omega)$ is positive, then every continuous local minimizer is monotone in the sense of Lebesgue.
\end{lem}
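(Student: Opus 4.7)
The plan is to argue by contradiction in the standard fashion, comparing $u$ with a truncated competitor and exploiting positivity of $\phi$ to force a contradiction with continuity. By symmetry it suffices to prove the maximum principle (the minimum case is handled by the analogous truncation from below). Fix an open set $V$ with $\overline V\subset\Omega$, let $M:=\max_{\partial V} u$, and suppose toward contradiction that $u(x_0)>M$ for some $x_0\in V$. Pick any $M'\in(M,u(x_0))$.

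I would then construct the competitor $v:=\min(u,M')$ on $V$, extended as $u$ outside, and set $h:=v-u$, which equals $-(u-M')_+$ on $V$ and vanishes elsewhere. By continuity of $u$ combined with $M<M'$, there is a neighborhood of $\partial V$ inside $V$ on which $u<M'$; hence $\{u>M'\}\cap V$ has closure compactly contained in $V\Subset\Omega$, and in particular $\supp h\Subset\Omega$. Since $|h|\le u$ and $|\nabla h|\le|\nabla u|$ a.e., also $h\in W^{1,\phi}(\Omega)$, so $h$ is an admissible perturbation in the local minimizer condition.

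Applying that condition yields $\int_{\supp h}\phi(x,|\nabla u|)\,dx\le\int_{\supp h}\phi(x,|\nabla(u+h)|)\,dx$. The standard truncation formula $\nabla\min(u,M')=\nabla u\,\chi_{\{u<M'\}}$ a.e.\ shows $\nabla(u+h)=0$ a.e.\ on $\supp h\subset\{u\ge M'\}\cap\overline V$, so the right-hand side equals $\int\phi(x,0)\,dx=0$. Consequently $\int_{\supp h}\phi(x,|\nabla u|)\,dx=0$, and positivity of $\phi$ forces $|\nabla u|=0$ a.e.\ on $\supp h$, and in particular on the open set $\{u>M'\}\cap V$.

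To conclude, I would observe that on the open set $\{u>M'\}\cap V$ the continuous function $u$ has vanishing weak gradient, hence is constant on each connected component. Every such component is compactly contained in $V$ (its closure cannot reach $\partial V$, where $u\le M<M'$), so its topological boundary lies inside $V$ and, by continuity, is contained in $\{u=M'\}$; the constant value on each component is therefore $M'$, contradicting $u>M'$ in the interior. The only genuine subtleties are arranging $\supp h\Subset\Omega$ (which is why $M'$ must be \emph{strictly} above $M$) and invoking positivity of $\phi$ to upgrade the vanishing energy integral to a pointwise vanishing gradient — after those two points, the argument is essentially the textbook minimum principle.
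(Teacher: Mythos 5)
Your argument is correct and follows essentially the same route as the paper: test the local minimality with the truncation $h=-(u-M')_+$ supported in $V$ (valid since continuity keeps $\supp h\Subset\Omega$), conclude zero energy, and use positivity of $\phi$ to get $\nabla u=0$ a.e.\ on the truncated set. The only cosmetic difference is the endgame — you run a contradiction via connected components of $\{u>M'\}\cap V$, while the paper notes directly that $\nabla h=0$ a.e.\ and $h$ is continuous and vanishes off $V$, hence $h\equiv0$, then lets $M'\downarrow\max_{\partial V}u$ — but this is the same idea in slightly different packaging.
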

\begin{proof}
Let $u\in W_{\loc}^{1,\phi}(\Omega)\cap C(\Omega)$ be a local minimizer and $D\Subset\Omega$. 
Fix $M>\max_{\partial D} u$ and note that $(u-M)_+$ is zero in some neighbourhood of $\partial D$ since $u$ is continuous.
Thus $h:=(u-M)_+ \chi_D$ belongs to $W^{1,\phi}(\Omega)\cap C(\Omega)$ and has compact support in $\Omega$.
Using that $u$ is a local minimizer, we obtain that 
\[
\int_{\supp h}\phi(x,|\nabla u|)\,dx
\le 
\int_{\supp h}\phi(x,|\nabla (u-h)|)\,dx
=
0.
\]
Since $\phi(x,t)>0$ for every $t>0$ and a.e.\ $x\in\Omega$, it follows that $\nabla u=0$ a.e.\ 
in $\supp h$. Thus $\nabla h=0$ a.e.\ in $\Omega$. Since $h$ is continuous and equals $0$ in 
$\Omega\setminus D$, we conclude that $h\equiv 0$. 
%
Hence $u\le M$ in $D$.
Letting $M\to \max_{\partial D} u$, we find that $u\le \max_{\partial D} u$. 
The proof that $\min_{\partial D} u\le u$ in $D$ is similar.
\end{proof}

For $x \in \Omega$ we write $r_x := \frac12 \dist(x, \partial \Omega)$. Let $1 \le p < \infty$. In \cite[Definition 3.6]{Lat04} a function $u:\Omega \to \R$ is called a \emph{Bloch function} if 
\[
\sup_{x\in \Omega} r_x^p \fint_{B_{r_x}}  |\nabla u|^p \, dx < \infty.
\]
Note that if $u$ is an analytic function in the plane and $p=2$, then by the mean value 
property
\[
\sup_{x\in \Omega} r_x \Big(\fint_{B_{r_x}} |u'|^2 \, dx \Big)^\frac12
\approx 
\sup_{x\in \Omega} d(x,\partial \Omega)\,|u'(x)|
\approx 
\sup_{x\in \Omega} (1-|x|^2) |u'(x)|, 
\]
which connects this with Bloch functions in complex analysis. 
In the next theorem we assume for $\log u$  a Bloch-type condition.
In the case $p=n$ the next result was stated in \cite[Lemma~6.3]{HarHL09}.

\begin{lem}\label{lem:Harnack}
Let $u:\Omega\to (0,\infty)$ be continuous and monotone in the sense of Lebesgue.
If $B_{4r}\Subset\Omega$, and
\[
r^p \fint_{B_{2r}}|\nabla\log u|^p \, dx 
\le A
\]
for $p>n-1$, then
\[
\sup_{B_{r}} u
\le C\inf_{B_{r}}u
\]
for some $C$ depending only on $A$, $p$ and $n$.
\end{lem}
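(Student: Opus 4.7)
Set $v:=\log u$. Since $u:\Omega\to(0,\infty)$ is continuous, $v$ is continuous and real-valued, and since the logarithm is strictly increasing, the extrema of $v$ on any compact set $K\subset\Omega$ are attained precisely where those of $u$ are. Hence $v$ is also monotone in the sense of Lebesgue, and in particular $\osc_{B_\rho} v=\osc_{\partial B_\rho} v$ for every $\rho$ with $\overline{B_\rho}\subset\Omega$. The hypothesis reads $\int_{B_{2r}}|\nabla v|^p\,dx\le A\,\omega_n\,2^n r^{n-p}$, so $v\in W^{1,p}(B_{2r})$. The whole problem reduces to showing $\osc_{B_r} v\le C(A,p,n)$; the Harnack conclusion follows by exponentiation, since $\sup_{B_r} u/\inf_{B_r} u=\exp(\osc_{B_r} v)$.

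The key tool is a spherical Morrey-type inequality: since $p>n-1$, Morrey's embedding applied on the $(n-1)$-dimensional manifold $\partial B_\rho$ gives, for any function $w\in W^{1,p}(\partial B_\rho)$,
\[
\osc_{\partial B_\rho} w\le C(n,p)\,\rho^{\,1-(n-1)/p}\Big(\int_{\partial B_\rho}|\nabla_\tau w|^p\,dS\Big)^{1/p},
\]
where $\nabla_\tau$ is the tangential gradient. By Fubini/polar coordinates applied to $|\nabla v|^p\in L^1(B_{2r})$, the restriction $v|_{\partial B_\rho}$ belongs to $W^{1,p}(\partial B_\rho)$ for a.e.\ $\rho\in(r,2r)$, and $|\nabla_\tau v|\le|\nabla v|$ on $\partial B_\rho$. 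Combining this with monotonicity,
\[
\big(\osc_{B_\rho} v\big)^p=\big(\osc_{\partial B_\rho} v\big)^p\le C(n,p)^p\,\rho^{\,p-(n-1)}\int_{\partial B_\rho}|\nabla v|^p\,dS
\]
for a.e.\ $\rho\in(r,2r)$.

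Now I integrate this inequality in $\rho$ over $(r,2r)$. Using $\rho^{p-(n-1)}\le(2r)^{p-(n-1)}$ and the co-area identity $\int_r^{2r}\!\int_{\partial B_\rho}|\nabla v|^p\,dS\,d\rho=\int_{B_{2r}\setminus B_r}|\nabla v|^p\,dx\le\int_{B_{2r}}|\nabla v|^p\,dx$, I obtain
\[
\int_r^{2r}\big(\osc_{B_\rho} v\big)^p\,d\rho\le C(n,p)^p(2r)^{p-(n-1)}\int_{B_{2r}}|\nabla v|^p\,dx\le C'(n,p)\,A\,r.
\]
Since $\rho\mapsto\osc_{B_\rho} v$ is non-decreasing, the left-hand side is at least $r(\osc_{B_r} v)^p$, hence $\osc_{B_r} v\le \big(C'(n,p)A\big)^{1/p}$. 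Exponentiating yields $\sup_{B_r}u\le \exp\!\big((C'A)^{1/p}\big)\inf_{B_r}u$, which is the required Harnack inequality with $C$ depending only on $A$, $p$ and $n$.

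The main technical obstacle is the application of the spherical Morrey inequality, specifically ensuring that for a.e.\ $\rho$ the trace of $v$ on $\partial B_\rho$ lies in $W^{1,p}(\partial B_\rho)$ with tangential gradient controlled by the ambient $|\nabla v|$; this is a routine Fubini argument in polar coordinates but is the step where the critical threshold $p>n-1$ enters. Everything else—the reduction via monotonicity and the integration in $\rho$—is straightforward once that inequality is in hand.
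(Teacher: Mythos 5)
Your proof is correct and follows essentially the same route as the paper: oscillation on spheres via the Sobolev--Morrey embedding on $\partial B_\rho$ for $p>n-1$, integration in the radius over $(r,2r)$, and monotonicity in the sense of Lebesgue to pass from spheres to the ball, concluding by exponentiation. The only difference is technical: you justify the a.e.-sphere estimate by a Fubini/trace argument for $W^{1,p}$ functions, whereas the paper mollifies $v$, applies the spherical estimate to the smooth approximations, and passes to the limit with uniform convergence and Fatou's lemma.
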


\begin{proof}
Denote $v:=\log u$.
Since the logarithm is increasing, $v$ is monotone in the sense of Lebesgue because $u$ is.

As $u$ is continuous and positive in $\overline{B_{3r}}\subset \Omega$, it is bounded away from $0$.
Thus $v\in W^{1,p}(B_{2r})$ is uniformly continuous in $B_{3r}$. Mollification gives a sequence 
$(v_i)_{i=0}^\infty$ of functions in $C^\infty(B_{2r})\cap W^{1,p}(B_{2r})$, such that $v$ 
is the limit of $v_i$ in $W^{1,p}(B_{2r})$ and $v_i\to v$ pointwise uniformly in $B_{2r}$, 
as $i\to\infty$ \cite[Theorem 4.1 (ii), p.~146]{EvaG92}.
By the Sobolev--Poincar\'e embedding $W^{1,p}(\partial B_R)\to C^{0, 1- \frac{n-1}{p}}(\partial B_R)$,
\[
\Big(\osc_{\partial B_R}v_i \Big)^p \lesssim R^{p-n+1} \int_{\partial B_R} |\nabla v_i|^p\,dS
\]
for every $R\in(0,2r)$, where $dS$ denotes the $(n-1)$-dimensional Hausdorff measure and the 
constant depends only on $p$ and $n$ (see, e.g.\ \cite[Lemma~1]{Geh62}, stated for the case $n=p=3$).
Integrating with respect to $R$ gives
\[
\int_r^{2r} \Big(\osc_{\partial B_R}v_i\Big)^p\,dR
\lesssim 
\int_r^{2r} R^{p-n+1} \int_{\partial B_R} |\nabla v_i|^p\,dS\,dR
\lesssim
r^{p-n+1}\int_{B_{2r}} |\nabla v_i|^p\,dx.
\]
Since $v_i \to v$ uniformly, we obtain that $(\osc_{\partial B_R}v)^p =\lim_{i\to\infty}(\osc_{\partial B_R}v_i)^p$ for every $R$.
Using this and $v_i \to v$ in $W^{1, p}(B_{2r})$,
it follows by Fatou's Lemma that 
\begin{align*}
\int_r^{2r}\Big(\osc_{\partial B_R}v\Big)^p\,dR
&\le \liminf_{i\to\infty}\int_r^{2r}\Big(\osc_{\partial B_R}v_i\Big)^p\,dR\\
& \le \liminf_{i\to\infty} Cr^{p-n+1}\int_{B_{2r}} |\nabla v_i|^p\,dx
= Cr^{p-n+1}\int_{B_{2r}} |\nabla v|^p\,dx.
\end{align*}
As $v$ is continuous and monotone in the sense of Lebesgue, we have that
$
\osc_{B_r}v
\le \osc_{B_R}v
= \osc_{\partial B_R}v
$
for $R\in(r,2r)$, and therefore
\[
r\Big(\osc_{B_r}v\Big)^p
\le \int_r^{2r}\Big(\osc_{\partial B_R}v\Big)^p\,dR
\le Cr^{p-n+1}\int_{B_{2r}} |\nabla v|^p\,dx
\le CrA.
\]
Since
\[
\osc_{B_r}v
=\sup_{x,y\in B_r}|v(x)-v(y)|
=\sup_{x,y\in B_r}\bigg|\log\frac{u(x)}{u(y)}\bigg|
= \log\frac{\sup_{B_r}u}{\inf_{B_r}u},
\]
it now follows that
\[
\sup_{B_r}u
\le \exp\big((CA)^{1/p}\big)\inf_{B_r}u. \qedhere
\]
\end{proof}

We conclude this section with the Harnack inequality for local minimizers. 
The novelty of the next theorem, apart from the technique, is that the constant depends only on $\frac qp$, not on 
$p$ and $q$ separately.

\begin{thm}[Harnack inequality]\label{thm:harnack}
Let $\phi \in \Phic(\Omega)$ satisfy \azero{} and \aone{}.
We assume that $B_{2r}\subset \Omega$ with $r\le 1$ and $\phi|_{B_{2r}}$ satisfies 
\ainc{p} and \adec{q} with $p,q\in (n, \infty)$.

Then any non-negative local minimizer $u \in W^{1, \phi}_\loc(\Omega)$ satisfies the Harnack inequality
\[
\sup_{B_r}(u+r)
\le C\inf_{B_r} (u+r)
\]
when $B_{4r}\subset \Omega$.
The constant $C$ depends only on $n$, $\beta$, $L_p$, $L_q$, $\frac qp$ and $\rho_\phi(|\nabla u|)$.
\end{thm}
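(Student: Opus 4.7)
The plan is to combine the three results of Sections~3 and~4: the Bloch-type estimate (Theorem~\ref{thm:Bloch}), the monotonicity result (Lemma~\ref{lem:monotone}) and the Harnack-from-Bloch lemma (Lemma~\ref{lem:Harnack}), with a short Harnack chain at the end.

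First I verify two prerequisites. From \ainc{p} on $B_{2r}$ together with $\phi(x,1/\beta)\ge 1$ given by \azero{}, we have $\phi(x,t)\gtrsim t^p$ for $t\ge 1/\beta$; since $\rho_\phi(|\nabla u|)<\infty$, this forces $|\nabla u|\in L^p(B_{2r})$, and $p>n$ together with Morrey's embedding produces a continuous representative of $u$ on $B_{2r}$, which I henceforth identify with $u$. Condition \adec{q} with $q<\infty$ makes $\phi$ positive on $B_{2r}$, so Lemma~\ref{lem:monotone}, applied with $B_{2r}$ in place of $\Omega$, gives that $u$ is monotone in the sense of Lebesgue on $B_{2r}$.

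Next, let $x_0$ denote the center of $B_r$ and fix an arbitrary $y\in\overline{B_r}(x_0)$. Because $|y-x_0|\le r$, we have $B_{r/2}(y)\subset B_{3r/2}(x_0)\subset B_{2r}(x_0)\subset\Omega$, and $\phi|_{B_{r/2}(y)}$ inherits \azero{}, \aone{}, \ainc{p} and \adec{q} with the same constants. Since $r/2\le 1$, Theorem~\ref{thm:Bloch} applied at center $y$ with radius $r/2$ yields
\[
\int_{B_{r/2}(y)}|\nabla\log(u+r/2)|^n\,dx\le C_1,
\]
where $C_1$ depends only on $n$, $\beta$, $L_p$, $L_q$, $q/p$ and $\rho_\phi(|\nabla u|)$, and is uniform in $y$. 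Moreover $B_r(y)\subset B_{2r}(x_0)\subset B_{4r}(x_0)\subset\Omega$ gives $B_r(y)\Subset\Omega$, so Lemma~\ref{lem:Harnack} applies to the positive, continuous, monotone function $u+r/2$ with exponent $p=n$ and radius $r/4$; its hypothesis $(r/4)^n\fint_{B_{r/2}(y)}|\nabla\log(u+r/2)|^n\,dx\le 2^{-n}|B_1|^{-1}C_1$ is immediate from the Bloch bound. This produces
\[
\sup_{B_{r/4}(y)}(u+r/2)\le C_2\inf_{B_{r/4}(y)}(u+r/2),
\]
and since $u+r/2\le u+r\le 2(u+r/2)$, we obtain $\sup_{B_{r/4}(y)}(u+r)\le 2C_2\inf_{B_{r/4}(y)}(u+r)$.

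Any two points of $B_r$ can be connected by a chain of at most $N=N(n)$ overlapping balls of the form $B_{r/4}(y_k)$ with centers $y_k\in\overline{B_r}(x_0)$; iterating the previous inequality along such a chain gives $\sup_{B_r}(u+r)\le(2C_2)^N\inf_{B_r}(u+r)$, which is the claim with $C=(2C_2)^{N(n)}$. The principal subtlety is that the Bloch constant $C_1$ must be uniform in $y$ and depend on the growth exponents only through $q/p$, not on $p$ and $q$ separately; this is exactly the form of constant delivered by Theorem~\ref{thm:Bloch}, because \azero{}, \aone{}, \ainc{p}, \adec{q} all pass to sub-balls $B_{r/2}(y)\subset B_{2r}(x_0)$ with unchanged constants, and $\rho_\phi(|\nabla u|)$ is a global quantity. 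The chaining adds only a purely dimensional factor, and the passage from $u+r/2$ to $u+r$ costs at most a factor of $2$.
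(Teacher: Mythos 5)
Your argument is correct and rests on the same three ingredients as the paper's proof --- the Bloch-type estimate (Theorem~\ref{thm:Bloch}), Lebesgue monotonicity via Lemma~\ref{lem:monotone}, and the Harnack-from-Bloch Lemma~\ref{lem:Harnack} --- but you assemble them differently. The paper applies Theorem~\ref{thm:Bloch} once, on $B_r$ centred at $x_0$, and then invokes Lemma~\ref{lem:Harnack} for $u+r$ directly, leaving the radius bookkeeping implicit (as stated, Lemma~\ref{lem:Harnack} needs the gradient bound on $B_{2\rho}$ to give Harnack on $B_\rho$, so a single application with the Bloch bound on $B_r$ literally yields Harnack only on $B_{r/2}$). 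Your version runs the Bloch estimate at radius $r/2$ around many centres $y$, applies Lemma~\ref{lem:Harnack} at radius $r/4$, and then chains; this keeps every application strictly inside $B_{2r}$, where \ainc{p} and \adec{q} (hence positivity of $\phi$, continuity of $u$, and Lemma~\ref{lem:monotone}) are actually available, and it costs only a factor $(2C_2)^{N(n)}$, so the constant still depends only on $n$, $\beta$, $L_p$, $L_q$, $\frac qp$ and $\rho_\phi(|\nabla u|)$, as required. One cosmetic point: since continuity and monotonicity are established on $B_{2r}$ rather than on all of $\Omega$, the hypothesis of Lemma~\ref{lem:Harnack} at a centre $y$ should read $B_r(y)\Subset B_{2r}$, which forces $|y-x_0|<r$ strictly; your appeal to $B_r(y)\Subset\Omega$ with $y\in\overline{B_r}$ is not quite the right containment, but taking the chain centres in the open ball $B_r$ (which is all the covering argument needs) removes the issue without any other change.
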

\begin{proof}
Let $u \in W^{1, \phi}_\loc(\Omega)$ be a non-negative local minimizer.
By Theorem~\ref{thm:Bloch}, 
\[
\int_{B_r} |\nabla\log(u+r)|^n \,dx
\le C,
\]
where $C$ depends only on $n$, 
$L_p$, $L_q$, $\frac qp$, the constants from \azero{} and \aone{}, and $\rho_\phi(|\nabla u|)$.
Since $p>n$, $\phi$ satisfies \ainc{p} and $u\in W^{1, \phi}_\loc(\Omega)$, 
$u$ is continuous and Lemma~\ref{lem:monotone} yields that $u+r$ is monotone in the sense of Lebesgue. 
Thus we can apply Lemma~\ref{lem:Harnack} to $u+r$, which gives the Harnack inequality. 
%
\end{proof}

%
%


\section{Minimizers with non-doubling growth}

Let us study minimizers with given boundary values.

\begin{defn}
Let $p \in [1, \infty)$, $\phi\in \Phic(\Omega)$ and define, for $\lambda\ge 1$,
\[
\phi_\lambda(x,t)
:=
\int_0^t
\tfrac p\lambda \tau^{p-1} + \min\{\phi_-'(x,\tau), p\lambda \tau^{p-1}\}\, d\tau
=
\tfrac 1\lambda t^p + \int_0^t
\min\{\phi_-'(x,\tau), p\lambda \tau^{p-1}\}\, d\tau.
\]
\end{defn}

Note that since $t \mapsto \phi_\lambda(x,t)$ is convex for a.e.\ $x \in \Omega$, the left derivative $\phi_-'$ exists for a.e. $x \in \Omega$, and therefore the above definition makes sense.

\begin{lem}\label{lem:phi_lambda}
If $\phi\in\Phic(\Omega)$, then $\phi_\lambda\in \Phic(\Omega)$ 
satisfies $\phi_\lambda(\cdot,t)\approx t^p$ with constants depending on $\lambda$.
Furthermore, 
\[
\min\{\phi(x, \tfrac t2), \lambda (\tfrac t2)^p\} + \tfrac 1\lambda t^p 
\le 
\phi_\lambda(x,t) \le \phi(x, t) + \tfrac 1\lambda t^p
\] 
for $\lambda\ge 1$, $\phi_\lambda(x, t) \le \phi_\Lambda(x,t)+\tfrac1\lambda t^p$ for any $\Lambda\ge \lambda\ge 1$, and $\phi_\lambda\to \phi$ as $\lambda\to\infty$. 
\end{lem}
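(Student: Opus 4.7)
The plan is to verify the five assertions in order; only the lower bound of the sandwich inequality is non-routine. Since $\phi_-'(x, \cdot)$ is non-decreasing by convexity of $\phi$ and $\tau \mapsto p\lambda\tau^{p-1}$ is non-decreasing for $p \ge 1$, the integrand defining $\phi_\lambda$ is non-decreasing in $\tau$, so $\phi_\lambda(x, \cdot)$ is convex (and continuous through the integral representation); together with positivity, blowup at infinity from the explicit $\tfrac{1}{\lambda}t^p$ term, and measurability in $x$ inherited from $\phi_-'$, this places $\phi_\lambda$ in $\Phic(\Omega)$. Replacing the minimum in the integrand by either of its two arguments immediately yields $\tfrac{1}{\lambda}t^p \le \phi_\lambda(x, t) \le (\lambda + \tfrac{1}{\lambda})t^p$, i.e.\ the equivalence $\phi_\lambda(\cdot, t) \approx t^p$.

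For the sandwich inequality, the upper bound $\phi_\lambda(x, t) \le \phi(x, t) + \tfrac{1}{\lambda}t^p$ follows from $\min\{\phi_-'(x,\tau), p\lambda\tau^{p-1}\} \le \phi_-'(x,\tau)$ together with $\int_0^t \phi_-'(x, \tau)\, d\tau = \phi(x, t)$. The lower bound is the main obstacle, since naively pulling the minimum outside the integral reverses the desired inequality. The crucial observation is that the pointwise minimum
\[
m(\tau) \;:=\; \min\{\phi_-'(x, \tau),\, p\lambda\tau^{p-1}\}
\]
of two non-decreasing functions is itself non-decreasing, so
\[
\int_0^t m(\tau)\, d\tau \;\ge\; \int_{t/2}^t m(\tau)\, d\tau \;\ge\; \tfrac{t}{2}\, m(t/2).
\]
Monotonicity of each argument separately also gives $\phi(x, t/2) = \int_0^{t/2} \phi_-' \,d\tau \le \tfrac{t}{2}\phi_-'(x, t/2)$ and $\lambda(t/2)^p = \int_0^{t/2} p\lambda\tau^{p-1}\,d\tau \le \tfrac{t}{2}\, p\lambda(t/2)^{p-1}$, whence $\min\{\phi(x, t/2), \lambda(t/2)^p\} \le \tfrac{t}{2} m(t/2)$. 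Chaining this with the previous display and adding $\tfrac{1}{\lambda}t^p$ to both sides yields the claim.

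The remaining two items are short. For $\Lambda \ge \lambda$, raising the cap only raises the integrand, so $\phi_\lambda(x, t) - \tfrac{1}{\lambda}t^p \le \phi_\Lambda(x, t) - \tfrac{1}{\Lambda}t^p$; dropping the non-negative $\tfrac{1}{\Lambda}t^p$ on the right yields the stated estimate. For the convergence, the cap $p\lambda\tau^{p-1}$ tends to infinity at each $\tau > 0$ as $\lambda \to \infty$, so $m(\tau) \nearrow \phi_-'(x, \tau)$ monotonically; monotone convergence gives $\int_0^t m \to \phi(x, t)$, while the correction $\tfrac{1}{\lambda}t^p$ vanishes. Thus the only step requiring insight is the joint-monotonicity trick for the lower sandwich bound.
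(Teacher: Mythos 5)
Your proposal is correct and follows the same overall route as the paper's proof: both work from the integral representation, bound the integrand termwise to get $\phi_\lambda(\cdot,t)\approx t^p$, the upper sandwich bound and the comparison $\phi_\lambda\le\phi_\Lambda+\tfrac1\lambda t^p$, and localize near $t/2$ for the lower bound. The two steps you handle differently are exactly the delicate ones. For the lower bound the paper argues measure-theoretically: one of the two terms in $\min\{\phi_-'(x,\tau),p\lambda\tau^{p-1}\}$ realizes the minimum on a subset of $[0,t]$ of measure at least $\tfrac t2$, and since both terms are increasing the integral over that set dominates $\min\{\phi(x,\tfrac t2),\lambda(\tfrac t2)^p\}$. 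Your version instead uses that the minimum $m$ of two non-decreasing functions is non-decreasing, so $\int_0^t m\,d\tau\ge\tfrac t2\,m(\tfrac t2)\ge\min\{\phi(x,\tfrac t2),\lambda(\tfrac t2)^p\}$ (the last step using $\phi(x,\tfrac t2)\le\tfrac t2\phi_-'(x,\tfrac t2)$ and, since $p\ge1$, $\lambda(\tfrac t2)^p\le\tfrac t2\,p\lambda(\tfrac t2)^{p-1}$); this is slightly more elementary, as it avoids the implicit rearrangement-type fact behind the paper's ``set of measure $\tfrac t2$'' argument. For $\phi_\lambda\to\phi$ the paper combines the upper estimate (giving $\limsup\le\phi$) with Fatou's lemma (giving $\liminf\ge\phi$), whereas you note that the truncated integrand increases monotonically to $\phi_-'$ and invoke monotone convergence; both are valid, and yours is marginally shorter. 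The verification that $\phi_\lambda\in\Phic(\Omega)$ is treated lightly in your write-up, but no more lightly than in the paper, which omits it entirely; so there is no gap relative to the paper's own standard.
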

\begin{proof}
It follows from the definition that 
$\tfrac p\lambda \tau^{p-1} \le \phi_\lambda' (x,\tau)\le p(\tfrac1\lambda +\lambda)\tau^{p-1}$. 
Integrating over $\tau\in [0,t]$ gives $\phi_\lambda(\cdot,t)\approx t^p$.
Let $\Lambda\ge \lambda\ge 1$. Since the minimum in the integrand is increasing in $\lambda$, 
we see that 
\[
\phi_\lambda(x,t) 
\le 
\phi_\Lambda(x,t) + \big(\tfrac1\lambda-\tfrac1\Lambda\big)t^p
\le 
\phi(x,t) + \tfrac1\lambda t^p,
\]
and thus $\limsup_{\lambda\to\infty}\phi_\lambda\le \phi$. On the other hand, Fatou's Lemma 
gives 
\begin{align*}
\phi(x,t) 
&= 
\int_0^t
\lim_{\lambda\to\infty}(\tfrac p\lambda \tau^{p-1} + \min\{\phi_-'(x,\tau), p\lambda \tau^{p-1}\})\, d\tau \\
&\le
\liminf_{\lambda\to\infty}
\int_0^t
\tfrac p\lambda \tau^{p-1} + \min\{\phi_-'(x,\tau), p\lambda \tau^{p-1}\}\, d\tau
=
\liminf_{\lambda\to\infty} \phi_\lambda(x,t). 
\end{align*}
One of the terms in the minimum $\min\{\phi_-'(x,\tau), p\lambda \tau^{p-1}\}$ is achieved 
in at least a set of measure $\frac t2$. Since both terms are increasing in $\tau$, this implies that
\[
\phi_\lambda(x,t)
\ge
\min\bigg\{\int_0^{t/2}\phi_-'(x,\tau)\, d\tau,\int_0^{t/2}p\lambda \tau^{p-1}\, d\tau\bigg\}
+ \tfrac 1\lambda t^p 
=
\min\{\phi(x, \tfrac t2), \lambda (\tfrac t2)^p\} +\tfrac 1\lambda t^p. \qedhere
\]
\end{proof}

Since $\phi_\lambda(x,t)\approx t^p$, it follows by \cite[Proposition~3.2.4]{HarH19} that 
$W^{1,\phi_\lambda}(\Omega)=W^{1,p}(\Omega)$ and the norms $\|\cdot\|_{\phi_\lambda}$ and 
$\|\cdot\|_p$ are comparable. However, the embedding 
constant blows up as $\lambda\to \infty$ unless $\phi$ also satisfies \adec{p}. 
This approximation approach is similar to that in \cite{EleN_pp}.
Note in the next results that $f$ is bounded by the Sobolev embedding in 
$W^{1,p}(\Omega)$. 

\begin{lem}\label{lem:approximation}
Let $q:\Omega \to (n, \infty)$, and let $\phi\in\Phic(\Omega)$ satisfy \azero{}, \ainc{p} and \adec{\qx}, $p>n$.
Assume that $f\in W^{1,\phi}(\Omega)$ with $\varrho_\phi(\nabla f)<\infty$.
Then there exists a sequence $(u_{\lambda_k})$ of Dirichlet $\phi_{\lambda_k}$-energy 
minimizers with the boundary value function $f$ and a minimizer of the $\phi$-energy 
$u_\infty\in f+W^{1,\phi}_0(\Omega)$ such that 
 $u_{\lambda_k}\to u_\infty$ uniformly in $\Omega$ as $\lambda_k \to \infty$.
\end{lem}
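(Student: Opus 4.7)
The plan is to construct each $u_\lambda$ by the direct method, derive a uniform $W^{1,p}$-bound so that Morrey's embedding furnishes a uniformly convergent subsequence, and identify the limit as the $\phi$-energy minimizer by exploiting the nearly-monotone structure of $\phi_\lambda$ provided by Lemma~\ref{lem:phi_lambda}.

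First, since $\phi_\lambda(\cdot,t)\approx t^p$ with $p>n$, the space $W^{1,\phi_\lambda}(\Omega)$ coincides with $W^{1,p}(\Omega)$ (with $\lambda$-dependent equivalence), so $f\in W^{1,\phi_\lambda}(\Omega)$ and the $\phi_\lambda$-energy is convex, weakly lower semicontinuous and coercive on $W^{1,p}$. The direct method yields a minimizer $u_\lambda\in f+W^{1,p}_0(\Omega)$ of the $\phi_\lambda$-energy. For a $\lambda$-independent $W^{1,p}$-bound, I test minimality against $f$, using $\phi_\lambda\le\phi+\tfrac1\lambda t^p$ from Lemma~\ref{lem:phi_lambda} to get $\rho_{\phi_\lambda}(\nabla u_\lambda)\le\rho_\phi(\nabla f)+\tfrac1\lambda\|\nabla f\|_p^p\le C$, and then apply the lower bound $\phi_\lambda(x,t)\ge\min\{\phi(x,t/2),\lambda(t/2)^p\}$ on the two sets where each argument realizes the minimum. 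On the set where $\lambda(t/2)^p$ wins, $\int|\nabla u_\lambda|^p\lesssim 1/\lambda$; on the other, \azero{} combined with \ainc{p} gives $\phi(x,t)\gtrsim t^p$ for $t\ge 2/\beta$, which controls $|\nabla u_\lambda|$ in $L^p$ there as well. Together with Poincar\'e applied to $u_\lambda-f$, this yields $\|u_\lambda\|_{W^{1,p}}\le C$ independently of $\lambda$, and Morrey's embedding $W^{1,p}\hookrightarrow C^{0,1-n/p}(\overline\Omega)$ makes $(u_\lambda)$ uniformly bounded and equi-H\"older.

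Arzel\`a--Ascoli, followed by a further extraction for weak $W^{1,p}$-compactness, gives $u_{\lambda_k}\to u_\infty$ uniformly and weakly in $W^{1,p}$, so $u_\infty-f\in W^{1,p}_0(\Omega)$. To see that $u_\infty$ minimizes the $\phi$-energy, fix a competitor $v\in f+W^{1,\phi}_0(\Omega)$ with $\rho_\phi(\nabla v)<\infty$; by the same \azero{}+\ainc{p} argument $\nabla v\in L^p$, so $v$ is admissible for each $\phi_{\lambda_k}$-problem (on a dense class, which is all we need by approximation). For $\lambda_k\ge\Lambda\ge 1$, Lemma~\ref{lem:phi_lambda} gives $\phi_\Lambda\le\phi_{\lambda_k}+\tfrac1\Lambda t^p$, and chaining minimality with $\phi_{\lambda_k}\le\phi+\tfrac{1}{\lambda_k}t^p$ yields
\[
\rho_{\phi_\Lambda}(\nabla u_{\lambda_k})
\le \rho_{\phi_{\lambda_k}}(\nabla u_{\lambda_k})+\tfrac{C}{\Lambda}
\le \rho_{\phi_{\lambda_k}}(\nabla v)+\tfrac{C}{\Lambda}
\le \rho_\phi(\nabla v)+\tfrac{1}{\lambda_k}\|\nabla v\|_p^p+\tfrac{C}{\Lambda}.
\]
Weak $W^{1,p}$-lower semicontinuity of the convex Carath\'eodory integrand $\phi_\Lambda$ together with $\lambda_k\to\infty$ give $\rho_{\phi_\Lambda}(\nabla u_\infty)\le\rho_\phi(\nabla v)+\tfrac{C}{\Lambda}$. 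Writing $\phi_\Lambda=\tfrac1\Lambda t^p+R_\Lambda$ where $R_\Lambda\nearrow\phi$ pointwise, monotone convergence gives $\rho_{\phi_\Lambda}(\nabla u_\infty)\to\rho_\phi(\nabla u_\infty)$ as $\Lambda\to\infty$, hence $\rho_\phi(\nabla u_\infty)\le\rho_\phi(\nabla v)$. Taking $v=f$ shows $\nabla u_\infty\in L^\phi$, and with $u_\infty-f\in L^\infty\cap W^{1,p}_0$ one concludes $u_\infty-f\in W^{1,\phi}_0(\Omega)$.

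The main obstacle is the double limit in the final step: the remainder $\tfrac1\Lambda\|\nabla u_{\lambda_k}\|_p^p$ must remain uniform in $k$, which is precisely why the $\lambda$-independent $W^{1,p}$-bound of step two is crucial, and the MCT passage $\Lambda\to\infty$ relies on isolating the monotone $R_\Lambda$-part of $\phi_\Lambda$ supplied by Lemma~\ref{lem:phi_lambda}. A secondary technical point is to verify that $u_\infty-f$ belongs to $W^{1,\phi}_0$ in the generalized-Orlicz sense rather than merely to $W^{1,p}_0$, which requires using \azero{}+\ainc{p} to pass between the two Sobolev scales.
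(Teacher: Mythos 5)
Most of your argument is sound and runs parallel to the paper's proof: existence of $u_\lambda$ by the direct method (the paper quotes \cite{HarH19a}), the uniform bound obtained by testing with $f$ and using $t^p\lesssim\phi_\lambda(x,t)+1$ together with Poincar\'e, weak $W^{1,p}$-compactness plus $p>n$ to get uniform convergence of a subsequence, and weak lower semicontinuity combined with the comparison inequalities of Lemma~\ref{lem:phi_lambda}. Your identification of the minimality of $u_\infty$ via the direct chain $\rho_{\phi_\Lambda}(\nabla u_\infty)\le\rho_\phi(\nabla v)+\tfrac{C}{\Lambda}$ followed by monotone convergence in $\Lambda$ is correct and is in fact a slightly more streamlined alternative to the paper's contradiction argument (which uses $\phi_\lambda\to\phi$, $\phi_\lambda\lesssim\phi+1$ and dominated convergence). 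Two small repairs: apply Morrey/Arzel\`a--Ascoli to $u_\lambda-f\in W^{1,p}_0(\Omega)$ extended by zero, since $\Omega$ is only a bounded domain (this is how the paper uses the compact embedding from \cite{AdaF03}); and the ``dense class'' caveat for competitors is unnecessary, because competitors with $\rho_\phi(\nabla v)=\infty$ satisfy the minimality inequality trivially, while every $v\in f+W^{1,\phi}_0(\Omega)$ lies in $f+W^{1,p}_0(\Omega)$ by \azero{} and \ainc{p}.

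The genuine gap is the final claim $u_\infty-f\in W^{1,\phi}_0(\Omega)$, which is part of the statement being proved. You assert it from $u_\infty-f\in L^\infty\cap W^{1,p}_0(\Omega)$ and $\nabla u_\infty\in L^\phi(\Omega)$, and the mechanism you indicate --- ``using \azero{}+\ainc{p} to pass between the two Sobolev scales'' --- goes in the wrong direction: those conditions give $L^\phi(\Omega)\hookrightarrow L^p(\Omega)$ on the bounded set $\Omega$, hence $W^{1,\phi}_0(\Omega)\subset W^{1,p}_0(\Omega)$, and no embedding argument can upgrade membership in $W^{1,p}_0(\Omega)$ to membership in $W^{1,\phi}_0(\Omega)$, since the latter is the closure of compactly supported functions in the (generally strictly stronger, possibly non-doubling) $W^{1,\phi}$-norm. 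What is needed, and what the paper supplies, is a genuine approximation step: because $p>n$, the function $w:=u_\infty-f$ is continuous and extends continuously by zero across $\partial\Omega$ \cite[Theorem~9.1.3]{AdaH96}, and since $w\in W^{1,\phi}(\Omega)$ one can then approximate $w$ in $W^{1,\phi}(\Omega)$ by compactly supported truncations as in \cite[Lemma~1.26]{HeiKM06}. Without this step you have only shown that $u_\infty\in f+W^{1,p}_0(\Omega)$ and that $\rho_\phi(\nabla u_\infty)\le\rho_\phi(\nabla v)$ for all $v\in f+W^{1,\phi}_0(\Omega)$, which is not yet the conclusion of the lemma.
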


\begin{proof}
Note that we use $W^{1,\phi_\lambda}(\Omega)=W^{1,p}(\Omega)$ and $W^{1,\phi_\lambda}_0(\Omega)=W^{1,p}_0(\Omega)$ several times in this proof.
Let $\lambda\ge p$. 
Note that $f\in W^{1,p}(\Omega)$ since $t^p\lesssim\phi(x,t)+1$ 
by \azero{} and \ainc{p}. 
By \cite[Theorem~6.2]{HarH19a} there exists a minimizer $u_\lambda\in f + W_0^{1,p}(\Omega)$ of
\[
\int_\Omega\phi_\lambda(x,|\nabla u|)\,dx.
\]

Fix $\lambda\ge 1$.
By Lemma~\ref{lem:phi_lambda} and $t^p\lesssim\phi(x,t)+1$, we have $t^p\lesssim 
\min\{\phi(x,\frac t2), \lambda t^p\}+1
\lesssim \phi_\lambda(x,t) +1$. Also by the same lemma, $\phi_\lambda\lesssim \phi + \tfrac{1}{\lambda} t^p\lesssim \phi +1$.
Since $f$ is a valid test-function and $u_\lambda$ is a $\phi_\lambda$-minimizer, we have
\[
\begin{split}
\int_\Omega |\nabla u_\lambda|^p\,dx
&\lesssim \int_\Omega\phi_\lambda(x,|\nabla u_\lambda|)+1\,dx
\le\int_\Omega\phi_\lambda(x,|\nabla f|)+1\,dx
\lesssim \int_\Omega\phi(x,|\nabla f|)+1\,dx
<\infty,
\end{split}
\]
 and hence $\rho_p(\nabla u_\lambda)$ is uniformly bounded. Note that the 
implicit constants do not depend on $\lambda$. 

Since $u_\lambda-f\in W^{1,p}_0(\Omega)$, the Poincar\'e inequality implies that 
\[
\|u_\lambda-f\|_p
\lesssim
\|\nabla (u_\lambda-f)\|_{p} 
\lesssim
\|\nabla u_\lambda\|_p 
+\|\nabla f\|_p 
\le c. 
\]
Therefore, 
$
\|u_\lambda\|_p 
\le \|u_\lambda-f\|_p + \|f\|_p
\le c
$
and so $\|u_\lambda\|_{1,p}$ is uniformly bounded.
Since $f + W_0^{1,p}(\Omega)$ is a closed subspace of $W^{1,p}(\Omega)$, it is a reflexive Banach space. Thus there exists a sequence 
$(\lambda_k)_{k=1}^\infty$ tending to infinity and a function $u_\infty \in f + W_0^{1,p}(\Omega)$ such that 
$u_{\lambda_k}\rightharpoonup u_\infty$ in $W^{1,p}(\Omega)$.
Since $p>n$, the weak convergence 
$u_{\lambda_k}-f\rightharpoonup u_\infty-f$ in $W^{1,p}_0(\Omega)$ and compactness of 
the Sobolev embedding \cite[Theorem 6.3 (Part IV), p.~168]{AdaF03}
imply that $u_{\lambda_k}-f \to u_\infty-f$ in the supremum norm. 
Hence $u_{\lambda_k} \to u$ uniformly in $\Omega$. 

We note that the modular $\rho_{\phi_\lambda}$ satisfies the conditions of \cite[Definition~2.1.1]{DieHHR11}.
Hence, it is weakly lower semicontinuous by \cite[Theorem~2.2.8]{DieHHR11}, and we obtain that
\begin{equation}\label{eq:u-infinity}
\begin{split}
\int_\Omega\phi_\lambda(x,|\nabla u_\infty|)\,dx
&\le \liminf_{k\to\infty} \int_\Omega\phi_\lambda(x,|\nabla u_{\lambda_k}|)\,dx
\le \liminf_{k\to\infty} \int_\Omega (1+\tfrac C{\lambda_k})\phi_{\lambda_k}(x,|\nabla u_{\lambda_k}|) + \tfrac C{\lambda_k}\,dx\\
&\le \liminf_{k\to\infty} \int_\Omega\phi_{\lambda_k}(x,|\nabla u_{\lambda_k}|) \,dx
\lesssim \int_\Omega \phi(x,|\nabla f|)+1\,dx
\end{split}
\end{equation}
for fixed $\lambda\ge 1$, where in the second inequality we used Lemma~\ref{lem:phi_lambda} and the fact that $t^p \lesssim \phi_\lambda(x,t) +1$.
It follows by monotone convergence that
\[
\int_\Omega\phi(x,|\nabla u_\infty|)\,dx
=
\lim_{\lambda\to\infty} \int_\Omega \min\{\phi(x,|\nabla u_\infty|), \lambda |\nabla u_\infty|^p\}\,dx
\le 
\limsup_{\lambda\to\infty}\int_\Omega \phi_\lambda(x,|\nabla u_\infty|)\,dx,
\]
and hence $|\nabla u_\infty| \in L^\phi(\Omega)$.
Since $p>n$, $\Omega$ is bounded and $u_\infty -f \in W^{1,p}_0(\Omega)$, we obtain by \cite[Theorem 2.4.1, p.~56]{Zie89}
that $u_\infty - f\in L^\infty(\Omega)$.
Moreover, $L^\infty(\Omega)\subset L^\phi(\Omega)$ since 
$\Omega$ is bounded and $\phi$ satisfies \azero{}. 
These and $f\in L^\phi(\Omega)$ yield that $u_\infty \in L^{\phi}(\Omega)$. 
Hence we have $u_\infty \in W^{1,\phi}(\Omega)$.
Since $u_\infty -f \in W^{1,p}_0(\Omega)$ and
$p>n$, it follows that $u_\infty-f$ can be continuously extended by $0$ in $\Omega^c$ \cite[Theorem~9.1.3]{AdaH96}. 
Then we conclude as in \cite[Lemma~1.26]{HeiKM06} that $u_\infty-f\in W^{1,\phi}_0(\Omega)$.

We conclude by showing that $u_\infty$ is a minimizer. Suppose to the contrary that there exists $u\in f+W^{1,\phi}_0(\Omega)$ with 
\[
\int_\Omega\phi(x,|\nabla u_\infty|)\,dx - \int_\Omega\phi(x,|\nabla u|)\,dx =:\epsilon > 0.
\]
By $\phi_\lambda\lesssim \phi+1$, $\phi_\lambda\to\phi$ and dominated convergence, there exists $\lambda_0$ such that 
\[
\int_\Omega\phi_\lambda(x,|\nabla u_\infty|)\,dx - \int_\Omega\phi_\lambda(x,|\nabla u|)\,dx \ge\tfrac\epsilon2 
\]
for all $\lambda\ge \lambda_0$. From the lower-semicontinuity estimate \eqref{eq:u-infinity} 
we obtain $k_0$ such that 
\[
\int_\Omega\phi_\lambda(x,|\nabla u_\infty|)\,dx
\le
\int_\Omega\phi_{\lambda_k}(x,|\nabla u_{\lambda_k}|)\,dx + \tfrac\epsilon4
\]
for all $k\ge k_0$. By increasing $k_0$ if necessary, we may assume that $\lambda_k\ge \lambda_0$ when 
$k\ge k_0$. For such $k$ we choose $\lambda=\lambda_k$ above and obtain that  
\[
\int_\Omega\phi_{\lambda_k}(x,|\nabla u|) + \tfrac\epsilon2
\le
\int_\Omega\phi_{\lambda_k}(x,|\nabla u_{\lambda_k}|)\,dx + \tfrac\epsilon4.
\]
This contradicts $u_{\lambda_k}$ being a $\phi_{\lambda_k}$-minimizer, since $u \in f + W^{1, \phi}_0(\Omega) \subset f + W^{1, \phi_{\lambda_k}}_0(\Omega)$.
Hence the counter-assumption was incorrect, and the minimization property of $u_\infty$ is proved. 
\end{proof}
%
We conclude this paper with the Harnack inequality for $\phi$-harmonic functions.
Here we use Remark~\ref{rem:caccioppoliHole} to handle the possibility that 
$q$ could be unbounded and thus $\phi$ non-doubling, like in Example~\ref{eg:varExp}. 
This is possible since $q^\circ$ is the supremum of $q$ only in the annulus, not the whole ball.

\begin{thm}[Harnack inequality]\label{thm:harnack-2}
Let $p, q : \Omega \to (n, \infty)$, and $\phi \in \Phic(\Omega)$ be strictly convex and satisfy \azero{}, \aone{}, \ainc{\px} and \adec{\qx}
with $\inf p>n$. 
Assume that $f\in W^{1,\phi}(\Omega)$ with $\rho_\phi(|\nabla f|)<\infty$.
Then there exists a unique minimizer $u$ of the $\phi$-energy with boundary values $f$.
Let $B_{4r}\subset \Omega$, $\displaystyle p^-:=\inf_{B_{2r}} p$ and $\displaystyle q^\circ:=\sup_{B_{2r}\setminus B_r} q$.
If $\frac{q^\circ}{p^-}<\infty$, then the Harnack inequality
\[
\sup_{B_r}(u+r)
\le C\inf_{B_r} (u+r)
\]
holds for all non-negative minimizers with $C$ depending only on $n$, $\beta$, $L_p$, $L_q$, $\frac{q^\circ}{p^-}$ and $\rho_\phi(|\nabla f|)$.
\end{thm}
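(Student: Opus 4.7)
The plan is to establish existence, uniqueness, and the Harnack inequality separately.

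For existence, note that \ainc{\px} with $\inf p > n$ yields \ainc{\inf p} (for the constant exponent $\inf p$) with the same constant $L_p$. Lemma~\ref{lem:approximation} therefore applies with $p := \inf p$, producing a sequence $(u_{\lambda_k})$ of Dirichlet $\phi_{\lambda_k}$-minimizers with boundary $f$ and a $\phi$-minimizer $u_\infty \in f + W^{1,\phi}_0(\Omega)$ with $u_{\lambda_k} \to u_\infty$ uniformly. For uniqueness, strict convexity of $\phi(x, \cdot)$ on $(0, \infty)$, combined with convexity of $|\cdot|$ and monotonicity of $\phi$, makes $\xi \mapsto \phi(x, |\xi|)$ strictly convex in $\xi$: equality in the Jensen inequality for the composition forces $\xi$ and $\eta$ to be positively parallel and $|\xi|=|\eta|$, i.e.\ $\xi=\eta$. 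Hence if distinct minimizers $u_1, u_2$ existed with the same boundary data, then $\nabla u_1 \ne \nabla u_2$ on a set of positive measure, and $(u_1+u_2)/2 \in f+W^{1,\phi}_0(\Omega)$ would have strictly smaller energy, contradicting minimality.

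For the Harnack inequality, let $u$ denote the unique non-negative minimizer obtained above, as the uniform limit of $u_{\lambda_k}$. Pick $\epsilon_k \downarrow 0$ so that $u_{\lambda_k} + \epsilon_k \ge 0$ on $B_{4r}$; since the $\phi_{\lambda_k}$-energy depends only on $\nabla u$, the translated function remains a non-negative local $\phi_{\lambda_k}$-minimizer. I would then apply (the proof of) Theorem~\ref{thm:harnack} to $u_{\lambda_k} + \epsilon_k$ with respect to $\phi_{\lambda_k}$ on $B_{2r}$ and pass to the limit $k \to \infty$ using uniform convergence, which gives $\sup_{B_r}(u+r) \le C \inf_{B_r}(u+r)$ provided the constant $C$ can be made uniform in $\lambda_k$.

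Securing uniform constants is the central task, and here Remark~\ref{rem:caccioppoliHole} is the crucial ingredient: in the Caccioppoli inequality underlying the Bloch estimate and hence Theorem~\ref{thm:harnack}, the \adec{q} hypothesis is required only on the annulus $B_{2r} \setminus B_r$ where $\nabla \eta \ne 0$; on this annulus $q \le q^\circ < \infty$. Using Lemma~\ref{lem:phi_lambda}'s bound $\phi_{\lambda_k}(x,t) \le \phi(x,t) + t^p/\lambda_k$ together with $p = \inf p \le p^- \le q^\circ$, I would verify that $\phi_{\lambda_k}$ satisfies \adec{q^\circ} on $B_{2r}\setminus B_r$ and \ainc{p^-} on $B_{2r}$ with constants independent of $\lambda_k$, and that $\phi_{\lambda_k}$ inherits \azero{} and \aone{} from $\phi$ (with a possibly shrunk $\beta$) uniformly for large $\lambda_k$. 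The modular $\rho_{\phi_{\lambda_k}}(|\nabla u_{\lambda_k}|) \le \rho_{\phi_{\lambda_k}}(|\nabla f|) \lesssim \rho_\phi(|\nabla f|)+1$ is also uniformly controlled since $u_{\lambda_k}$ is a $\phi_{\lambda_k}$-minimizer. These uniform inputs drive the Bloch-type estimate of Theorem~\ref{thm:Bloch} and the monotonicity/Harnack chain via Lemmas~\ref{lem:monotone} and~\ref{lem:Harnack}, producing a $\lambda_k$-independent Harnack constant.

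The main obstacle is precisely the uniform tracking of the structural constants of $\phi_{\lambda_k}$ as $\lambda_k \to \infty$, and ensuring that the hole trick of Remark~\ref{rem:caccioppoliHole} is implemented cleanly so that no dependence on $\sup_{B_{2r}} q$ leaks into the final constant. Once this bookkeeping is done, the limit $k \to \infty$ is routine thanks to uniform convergence.
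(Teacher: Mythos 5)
Your proposal follows essentially the same route as the paper: approximation by $\phi_{\lambda_k}$-minimizers via Lemma~\ref{lem:approximation}, uniqueness from strict convexity, the shift by $\epsilon_k$ to preserve non-negativity, uniform transfer of \azero{}, \aone{}, \ainc{p^-} and \adec{q^\circ} (on the annulus, via Remark~\ref{rem:caccioppoliHole}) to $\phi_{\lambda_k}$ using Lemma~\ref{lem:phi_lambda} together with the modular bound $\rho_{\phi_{\lambda_k}}(|\nabla u_{\lambda_k}|)\lesssim\rho_\phi(|\nabla f|)+1$, then the Bloch estimate, Lemma~\ref{lem:Harnack}, and passage to the limit by uniform convergence. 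This is correct and matches the paper's own proof.
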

\begin{proof}
By Lemma~\ref{lem:approximation}, there exists a sequence $(u_k)\subset f+W^{1,p}_0(\Omega)$ of 
minimizers of the $\phi_{\lambda_k}$ energy which converge uniformly to a minimizer 
$u_\infty\in f+W^{1,\phi}_0(\Omega)$ of the $\phi$-energy. Since $\phi$ is strictly convex, 
the minimizer is unique and so $u=u_\infty$. 

From \azero{} and \ainc{p^-} we conclude that $t^{p^-}\lesssim\phi(x,t)+1$. 
It follows from Lemma~\ref{lem:phi_lambda} that $\phi_\lambda(\cdot,t)\simeq \phi(\cdot, t)+\frac1\lambda t^{p^-}$. Thus $\phi_\lambda$ satisfies \azero{} and \aone{} with the same constants as $\phi$. 
Since $u_k\to u$ in $L^\infty(\Omega)$ and $u$ is non-negative we can choose a sequence 
$\epsilon_k\to 0^+$ such that $u_k+\epsilon_k$ is non-negative. 
By Theorem~\ref{thm:Bloch} with Remark~\ref{rem:caccioppoliHole}, 
\[
\int_{B_r} |\nabla\log(u_k+\epsilon_k+r)|^n \,dx \le C,
\]
where $C$ depends only on $n$, 
$L_p$, $L_q$, $\frac{q^\circ}{p^-}$, $\beta$ from \azero{} and \aone{}, and 
$\rho_{\phi_{\lambda_k}}(|\nabla u_k|)$. Since $u_k$ is a minimizer, 
$\rho_{\phi_{\lambda_k}}(|\nabla u_k|)\le \rho_{\phi_{\lambda_k}}(|\nabla f|)\lesssim 
\rho_\phi(|\nabla f|)+1$. 
Thus by Lemma~\ref{lem:Harnack}, we have
\[
\sup_{B_r}(u_k+\epsilon_k+r)
\le C\inf_{B_r} (u_k+\epsilon_k+r),
\]
with $C$ independent of $k$.
Since $u_k+\epsilon_k\to u_\infty$ uniformly, the claim follows.
\end{proof}


\section*{Acknowledgment}

Peter H\"ast\"o was supported in part by the Jenny and Antti Wihuri Foundation.

\section*{Conflict of interest}
The authors  declare no conflict of interest.


%
%
%
%
%
%


\begin{thebibliography}{999}

\bibitem{AdaF03}
R.\ Adams and J.\ Fournier: 
\emph{Sobolev Spaces}, second edition, Pure and Applied Mathematics, vol.~140, 
Elsevier/Academic Press, Amsterdam, 2003.

\bibitem{AdaH96}
R.\ Adams and L.\ Hedberg:
\emph{Function Spaces and Potential Theory}, 
Grundlehren der mathematischen Wissenschaften,
Springer Berlin, Heidelberg, 1996.
https://doi.org/10.1007/978-3-662-03282-4

%


\bibitem{BarCM15} 
P.\ Baroni, M.\ Colombo and G.\ Mingione:
Harnack inequalities for double phase functionals,
\emph{Nonlinear Anal.} 121 (2015), 206--222. 
https://doi.org/10.1016/j.na.2014.11.001

\bibitem{BCM2} 
P.\ Baroni, M.\ Colombo and G.\ Mingione: 
Nonautonomous functionals, borderline cases and related function classes, 
\emph{St. Petersburg Math. J.} 27 (2016), no.~3, 347--379.
https://doi.org/10.1090/spmj/1392 

\bibitem{BarCM18} 
P.\ Baroni, M.\ Colombo and G.\ Mingione: 
Regularity for general functionals with double phase, 
\emph{Calc. Var. Partial Differential Equations} 57 (2018), no.~2, art.~62. 
https://doi.org/10.1007/s00526-018-1332-z

\bibitem{BaaB21}
S.\ Baasandorj and S.-S.\ Byun:
Irregular obstacle problems for Orlicz double phase,
\textit{J.\ Math.\ Anal.\ Appl.} 507 (2021), no.~1, art.~125791.
https://doi.org/10.1016/j.jmaa.2021.125791

\bibitem{BaaBO21}
S.\ Baasandorj, S.-S.\ Byun and J.\ Oh:
Gradient estimates for multi-phase problems,
\textit{Calc. Var. Partial Differential Equations} 60 (2021), no.~3, art.~104. 
https://doi.org/10.1007/s00526-021-01940-8

\bibitem{BenHHK21}
A. Benyaiche, P. Harjulehto, P. Hästö and A. Karppinen:
The weak Harnack inequality for unbounded supersolutions of equations with generalized Orlicz growth,
\emph{J. Differential Equations} 275 (2021), 790--814.
https://doi.org/10.1016/j.jde.2020.11.007

\bibitem{BenK_pp}
A.\ Benyaiche and I.\ Khlifi: 
Harnack inequality for quasilinear elliptic equations in generalized Orlicz-Sobolev spaces, 
\emph{Potential Anal} 53 (2020), 631--643.
https://doi.org/10.1007/s11118-019-09781-z



\bibitem{BOh3} 
S.\ Byun and J.\ Oh: 
Regularity results for generalized double phase functionals, 
\emph{Analysis PDE} 13 (2020), no.~5, 1269--1300.
https://doi.org/10.2140/apde.2020.13.1269



\bibitem{Chl18}
I.\ Chlebicka:
A pocket guide to nonlinear differential equations in Musielak--Orlicz spaces, 
\emph{Nonlinear Anal.} 175 (2018), 1--27. 
https://doi.org/10.1016/j.na.2018.05.003

\bibitem{ChlZ_pp}
I.\ Chlebicka and A.\ Zatorska-Goldstein: 
Generalized superharmonic functions with strongly nonlinear operator, 
\textit{Potential Anal.}~\textbf{57} (2022), no.~3, 379--400.
https://doi.org/10.1007/s11118-021-09920-5

\bibitem{ChlGZ19b}
I.\ Chlebicka, F.\ Gianetti and A.\ Zatorska-Goldstein: 
Elliptic problems with growth in nonreflexive Orlicz spaces and with measure or $L^1$ data,
\emph{J.\ Math.\ Anal.\ Appl.}~479 (2019), no.~1, 185--213.
https://doi.org/10.1016/j.jmaa.2019.06.022

\bibitem{ChlGSW21}
I.\ Chlebicka, P.\ Gwiazda, A.\ \'{S}wierczewska-Gwiazda and A.\ Wr\'{o}blewska-Kami\'{n}ska:
\emph{Partial Differential Equations in Anisotropic Musielak-Orlicz Spaces}, 
Springer, Cham, 2021.
https://doi.org/10.1007/978-3-030-88856-5

\bibitem{ChlGZ19}
I.\ Chlebicka, P.\ Gwiazda and A.\ Zatorska-Goldstein: 
Renormalized solutions to parabolic equation in time and space dependent anisotropic Musielak--Orlicz spaces in absence of Lavrentiev’s phenomenon, 
\emph{J. Differential Equations} 267 (2019), no.~2, 1129--1166.
https://doi.org/10.1016/j.jde.2019.02.005

\bibitem{ColM15a}
M.\ Colombo and G.\ Mingione: 
Regularity for double phase variational problems,
\emph{Arch. Ration. Mech. Anal.} 215 (2015), no.~2, 443--496.
https://doi.org/10.1007/s00205-014-0785-2

\bibitem{ColM15b}
M.\ Colombo and G.\ Mingione: 
Bounded minimisers of double phase variational integrals,
\emph{Arch. Ration. Mech. Anal.} 218 (2015), no.~1, 219--273.
https://doi.org/10.1007/s00205-015-0859-9

\bibitem{CBGHW22}
{\'A}.\ Crespo-Blanco, L.\ Gasi{\'n}ski, P.\ Harjulehto and P.\ Winkert: 
A new class of double phase variable exponent problems: Existence and uniqueness, 
\textit{J. Differential Equations} 323 (2022), 182--228. 
https://doi.org/10.1016/j.jde.2022.03.029

\bibitem{DeFM20}
C.\ De Filippis and G.\ Mingione:
On the regularity of minima of non-autonomous functionals,
\textit{J.\ Geom.\ Anal.} 30 (2020), no.~2, 1584--1626. 
https://doi.org/10.1007/s12220-019-00225-z

\bibitem{DeFM21a}
C.\ De Filippis and G.\ Mingione:
Lipschitz bounds and nonautonomous integrals,
\textit{Arch.\ Ration.\ Mech.\ Anal.} 242 (2021), no.~2, 973--1057.
https://doi.org/10.1007/s00205-021-01698-5

\bibitem{DeFM21b}
C.\ De Filippis and G.\ Mingione:
Interpolative gap bounds for nonautonomous integrals,
\textit{Anal.\ Math.\ Phys.} 11 (2021), no.~3, art.~117. 
https://doi.org/10.1007/s13324-021-00534-z

\bibitem{DeFO19} 
C.\ De Filippis and J.\ Oh: 
Regularity for multi-phase variational problems, 
\emph{J. Differential Equations} 267 (2019), no.~3, 1631--1670.
https://doi.org/10.1016/j.jde.2019.02.015

\bibitem{DieHHR11} 
L.\ Diening, P.\ Harjulehto, P.\ H\"ast\"o and M.\ R\r u\v zi\v cka:
\emph{Lebesgue and Sobolev Spaces with Variable Exponents}, 
Lecture Notes in Mathematics, vol. 2017, Springer, Heidelberg, 2011.
https://doi.org/10.1007/978-3-642-18363-8
%

\bibitem{EleN_pp}
M.\ Eleuteri and A.\ Passarelli di Napoli:
On the validity of variational inequalities for obstacle problems with non-standard growth, 
\emph{Ann.\ Fenn.\ Math.} 47 (2022), no.~1, 395--416.
https://doi.org/10.54330/afm.114655

\bibitem{EvaG92}
L.\ C.\ Evans and R.\ F.\ Gariepy:
\emph{Measure Theory and Fine Properties of Functions},
CRC Press, Boca Raton, 1992.
https://doi.org/10.1201/b18333


\bibitem{Geh62}
F. W.\ Gehring:
Rings and quasiconformal mappings in space,
\emph{Trans. Amer. Math. Soc.} 103 (1962), 353--393.
https://doi.org/10.2307/1993834

%
\bibitem{GPRT17} 
F.\ Giannetti, A.\ Passarelli di Napoli, M.\ A.\ Ragusa and A. Tachikawa:
Partial regularity for minimizers of a class of non autonomous functionals with nonstandard growth, 
\emph{Calc. Var. Partial Differential Equations} 56 (2017), no.~6, art. 153.
https://doi.org/10.1007/s00526-017-1248-z

\bibitem{GwiSZ18}
P.\ Gwiazda, I.\ Skrzypczak and A.\ Zatorska-Goldstein: 
Existence of renormalized solutions to elliptic equation in Musielak--Orlicz space,
\emph{J. Differential Equations} 264 (2018), no.~1, 341--377.
https://doi.org/10.1016/j.jde.2017.09.007

\bibitem{HarH19a}
P.\ Harjulehto and P.\ H\"{a}st\"{o}:
Boundary regularity under generalized growth conditions,
\emph{Z. Anal. Anwend.} 38 (2019), no.~1, 73--96.
https://doi.org/10.4171/zaa/1628

\bibitem{HarH19}
P.\ Harjulehto and P.\ H\"ast\"o:
\emph{Orlicz Spaces and Generalized Orlicz Spaces},
Lecture Notes in Mathematics, vol. 2236, Springer, Cham, 2019.
https://doi.org/10.1007/978-3-030-15100-3

\bibitem{HarH21}
P.\ Harjulehto and P.\ H\"ast\"o:
Double phase image restoration, 
\emph{J. Math. Anal. Appl.} 501 (2021), no.~1, art. 123832.
https://doi.org/10.1016/j.jmaa.2019.123832

\bibitem{HarHK18}
P.\ Harjulehto, P.\ H\"ast\"o and A. Karppinen:
Local higher integrability of the gradient of a quasiminimizer under generalized Orlicz growth conditions, 
\emph{Nonlinear Anal.} 177 (2018), Part B, 543--552.
https://doi.org/10.1016/j.na.2017.09.010

\bibitem{HarHK16}
P.\ Harjulehto, P.\ H\"ast\"o and R.\ Kl\'en: 
Generalized Orlicz spaces and related PDE, 
\emph{Nonlinear Anal.} 143 (2016), 155--173.
https://doi.org/10.1016/j.na.2016.05.002

\bibitem{HarHL08}
P. Harjulehto, P. Hästö and V. Latvala: 
Minimizers of the variable exponent, non-uniformly convex Dirichlet energy, 
\textit{J. Math. Pures Appl. (9)}~89 (2008), no.~2, 174--197.
https://doi.org/10.1016/j.matpur.2007.10.006

\bibitem{HarHL09} 
P.\ Harjulehto, P.\ Hästö, and V.\ Latvala: 
Harnack's inequality for $p(\cdot)$-harmonic functions with unbounded exponent $p$, 
\emph{J. Math. Anal. Appl.} 352 (2009), no.~1, 345--359.
https://doi.org/10.1016/j.jmaa.2008.05.090

\bibitem{HarHL21}
P. Harjulehto, P. Hästö and M. Lee: 
Hölder continuity of $\omega$-minimizers of functionals with generalized Orlicz growth, 
\emph{Ann. Sc. Norm. Super. Pisa Cl. Sci. (5)} XXII (2021), no.~2, 549--582.
https://doi.org/10.2422/2036-2145.201908\_015

\bibitem{HarHT17}
P.\ Harjulehto, P.\ H\"ast\"o and O.\ Toivanen: 
H\"older regularity of quasiminimizers under generalized growth conditions, 
\emph{Calc. Var. Partial Differential Equations} 56 (2017), no.~2, art. 22.
https://doi.org/10.1007/s00526-017-1114-z




\bibitem{HasO22}
P.\ H\"ast\"o and J.\ Ok:
Maximal regularity for local minimizers of non-autonomous functionals, 
\textit{J.\ Eur.\ Math.\ Soc.\ (JEMS)}~\textbf{24} (2022), no.~4, 1285--1334.
https://doi.org/10.4171/jems/1118

\bibitem{HasO_pp}
P.\ H\"ast\"o and J.\ Ok:
Regularity theory for non-autonomous partial differential equations without Uhlenbeck structure,
\textit{Arch. Ration. Mech. Anal.}~\textbf{245} (2022), no.~3, 1401--1436. 
https://doi.org/10.1007/s00205-022-01807-y

\bibitem{HeiKM06}
J.\ Heinonen, T.\ Kilpel\"ainen, O.\ Martio:
\emph{Nonlinear Potential Theory of Degenerate Elliptic Equations}, Dover Publications Inc., Mineola, NY, 2006.
Unabridged republication of the 1993 original.


\bibitem{Kar19}
A.\ Karppinen:
Global continuity and higher integrability of a minimizer of an obstacle problem under generalized Orlicz growth conditions, 
\emph{Manuscripta Math.} 164 (2021), no.~1--2, 67--94. 
https://doi.org/10.1007/s00229-019-01173-2

\bibitem{LanM19}
J.\ Lang and O.\ Mendez:
\emph{Analysis on Function Spaces of Musielak-Orlicz Type}, 
Monographs and Research Notes in Mathematics,
Chapman \& Hall/CRC, 2019.
https://doi.org/10.1201/9781498762618

\bibitem{Lat04}
V.\ Latvala: 
BMO-invariance of quasiminimizers, 
\emph{Ann.\ Acad.\ Sci.\ Fenn.\ Math.}~{29} (2004), no.~2, 407--418.
https://doi.org/10.5186/aasfm.00

\bibitem{LiSYY22}
Q.-R.\ Li, W.\ Sheng, D.\ Ye and C.\ Yi: 
A flow approach to the Musielak-Orlicz-Gauss image problem,
\textit{Adv.\ Math.} 403 (2022), art.~108379.
https://doi.org/10.1016/j.aim.2022.108379


%

\bibitem{Mar20}
P.\ Marcellini:
Regularity under general $(p,q)$-conditions, 
\emph{Discrete Contin. Dyn. Syst. Ser. S} 13 (2020), no.~7, 2009--2031.
https://doi.org/10.3934/dcdss.2020155

\bibitem{Mar21}
P.\ Marcellini:
Growth conditions and regularity for weak solutions to nonlinear elliptic pdes,
\textit{J.\ Math.\ Anal.\ Appl.} 501 (2021), no.~1, art.~124408.
https://doi.org/10.1016/j.jmaa.2020.124408


\bibitem{MinP20}
G.\ Mingione and G.\ Palatucci:
Developments and perspectives in Nonlinear Potential Theory,
\emph{Nonlinear Anal.} 194 (2020), art.~111452.
https://doi.org/10.1016/j.na.2019.02.006

\bibitem{MinR21}
G.\ Mingione and V.\ Radulescu:
Recent developments in problems with nonstandard growth and nonuniform ellipticity,
\textit{J.\ Math.\ Anal.\ Appl.} 501 (2021), no.~1, art.~125197.
https://doi.org/10.1016/j.jmaa.2021.125197

\bibitem{MizNOS20}
Y.\ Mizuta, E.\ Nakai, T.\ Ohno and T.\ Shimomura:
Campanato-–Morrey spaces for the double phase functionals with variable exponents,
\emph{Nonlinear Anal.} 197 (2020), art.~111827.
https://doi.org/10.1016/j.na.2020.111827


\bibitem{MizOS21}
Y.\ Mizuta, T.\ Ohno and T. Shimomura:
Boundedness of fractional maximal operators for double phase functionals with variable exponents,
\textit{J.\ Math.\ Anal.\ Appl.} 501 (2021), no.~1, art.~124360. 
https://doi.org/10.1016/j.jmaa.2020.124360

\bibitem{NicP06}
C.\ P.\ Niculescu and L-E.\ Persson: 
\emph{Convex Functions and Their Applications, A Contemporary Approach},
CMS Books in Mathematics/Ouvrages de Mathématiques de la SMC, 23, Springer, New York, 2006.
https://doi.org/10.1007/0-387-31077-0

\bibitem{Ok16} 
J. Ok: 
Gradient estimates for elliptic equations with $L^{p(\cdot)}\log L$ growth,
\emph{Calc. Var. Partial Differential Equations} 55 (2016), no.~2, art. 26. 
https://doi.org/10.1007/s00526-016-0965-z



\bibitem{Pap22}
N.S.\ Papageorgiou:
Double phase problems: a survey of some recent results,
\textit{Opuscula Math.} 42 (2022), no.~2, 257--278.
https://doi.org/10.7494/OpMath.2022.42.2.257



\bibitem{SkrV21}
I.I.\ Skrypnik and M.V.\ Voitovych:
On the continuity of solutions of quasilinear parabolic equations with generalized Orlicz growth under non-logarithmic conditions,
\textit{Ann.\ Mat.\ Pura Appl.} 201 (2021), no.~3, 1381--1416.
https://doi.org/10.1007/s10231-021-01161-y



\bibitem{WanLZ19}
B. Wang, D.\ Liu and P.\ Zhao:
Hölder continuity for nonlinear elliptic problem in Musielak--Orlicz--Sobolev space,
\emph{J. Differential Equations} 266 (2019), no.~8, 4835--4863.
https://doi.org/10.1016/j.jde.2018.10.013

\bibitem{ZhaR18}
Q.\ Zhang and V.\ R\u{a}dulescu: 
Double phase anisotropic variational problems and combined effects of reaction and absorption terms,
\emph{J. Math. Pures Appl. (9)} 118 (2018), 159--203.
https://doi.org/10.1016/j.matpur.2018.06.015

\bibitem{Zhi86}
V.V.\ Zhikov: 
Averaging of functionals of the calculus of variations and elasticity theory,
\emph{Izv. Akad. Nauk SSSR Ser. Mat.} 50 (1986), 675--710.
https://doi.org/10.1070/IM1987v029n01ABEH000958

\bibitem{Zie89}
W. P. Ziemer: 
\emph{Weakly Differentiable Functions}, 
Graduate Texts in Mathematics, 120, Springer-Verlag, New York, 1989.
https://doi.org/10.1007/978-1-4612-1015-3
\end{thebibliography}
\end{document}